\DeclareMathOperator{\sech}{sech}
\newcommand{\beq}{\begin{eqnarray*}}
\newcommand{\feq}{\end{eqnarray*}}
\newcommand{\beqn}{\begin{eqnarray}}
\newcommand{\feqn}{\end{eqnarray}}
\newtheorem{theorem}{Theorem}[section]
\newtheorem{lemma}[theorem]{Lemma}
\theoremstyle{definition}
\newtheorem{example}[theorem]{Example}
\theoremstyle{remark}
\numberwithin{equation}{section}
\begin{document}

\title[Riccati dynamics of  Euler-Poisson equations]{On the Riccati dynamics of 2D Euler-Poisson equations with attractive forcing}
\author{Yongki Lee}
\address{Department of Mathematical Sciences, Georgia Southern University, Statesboro,  30458}
\email{yongkilee@georgiasouthern.edu}
\keywords{Critical thresholds, Euler-Poisson equations}
\subjclass{Primary, 35Q35; Secondary, 35B30}
\begin{abstract} 
 
 The Euler-Poisson (EP) system describes the dynamic behavior of many important physical flows. In this work, a Riccati system that governs two-dimensional EP equations is studied.
  The evolution of divergence is governed by the Riccati type equation with several nonlinear/nonlocal terms. Among these, the vorticity accelerates divergence while others further amplify the blow-up behavior of a flow. The growth of these blow-up amplifying terms are related to the Riesz transform of density, which lacks a uniform bound makes it difficult to study global solutions of the multi-dimensional EP system.  We show that the Riccati system can afford to have global solutions, as long as the growth rate of blow-up amplifying terms is not higher than exponential, and admits global smooth solutions for a large set of initial configurations.  To show this, we construct an auxiliary system in 3D space and find an invariant space of the system, then comparison with the original 2D system is performed. Some numerical examples are also presented.

\end{abstract}
\maketitle


\section{Introduction}

We are concerned with the threshold phenomenon in two-dimensional Euler-Poisson (EP) equations. The pressureless Euler-Poisson equations in multi-dimensions are
\begin{subequations} \label{101}
    \begin{equation} \label{101a}
        \rho_t + \nabla \cdot (\rho \mathbf{u})=0,
    \end{equation}
    \begin{equation} \label{101b}
        \mathbf{u}_t + \mathbf{u} \cdot \nabla \mathbf{u} = k \nabla \Delta^{-1} (\rho-c_b),
    \end{equation}
\end{subequations}
which are the usual statements of the conservation of mass and Newton's second law. Here $k$ is a physical constant which parameterizes the repulsive $k>0$ or attractive $k<0$ forcing, governed by the Poisson potential $\Delta^{-1}(\rho -c_b)$ with constant $c_b>0$ which denotes background state.
  The local density $\rho=\rho(t,\mathbf{x})$ : $\mathbb{R}^+ \times \mathbb{R}^2 \mapsto \mathbb{R^+} $  and the velocity field $\mathbf{u}(t,\mathbf{x})$ : $\mathbb{R}^+ \times \mathbb{R}^2 \mapsto \mathbb{R}^{2}$  are the unknowns. This hyperbolic system \eqref{101} with non-local forcing describes the dynamic behavior of many important physical flows, including  plasma with collision, cosmological waves, charge transport,  and the collapse of stars due to self gravitation.

There is a considerable amount of literature available on the solution behavior of Euler-Poisson system. Global existence due to damping relaxation and with non-zero back-ground can be found in \cite{Wang01}. For the model without damping relaxation, construction of a global smooth irrotational solution in three dimensional space can be found in \cite{Guo98}. Some related results on two dimensional case can be found in \cite{JLZ14, LW14, IP13}. One the other hand, we refer to \cite{E96, WW06, MP90} for singularity formation and nonexistence results.

We focus our attention on the questions of global regularity versus finite-time blow-up  of solutions for \eqref{101}. Many of the results mentioned above leave open the question of global regularity of solutions to \eqref{101} subject to more general conditions on initial configurations, which are not necessarily confined to a ``sufficiently small" ball of any preferred norm of initial data. In this regard, we are concerned here with so called Critical Threshold (CT) notion, originated and developed in a series of papers by Engelberg, Liu and Tadmor  \cite{ELT01, LT02, LT03} and more recently in various models \cite{TT14, BL20, YT19}. The critical threshold in \cite{ELT01} describes the conditional stability of the one-dimensional Euler-Poisson system, where the answer to the question of global vs. local existence depends on whether the initial data crosses a critical threshold. Following \cite{ELT01}, critical thresholds have been identified for several \emph{one-dimensional} models, e.g., $2 \times 2$ quasi-linear hyperbolic relaxation systems \cite{LL09}, Euler equations with non-local interaction and alignment forces \cite{TT14, CCTT16}, traffic flow models \cite{YT19}, and damped Euler-Poisson systems \cite{BL20}.

Moving to the \emph{multi-dimensional setup}, the main difficulty lies with the non-local nature of the forcing term $\nabla \Delta^{-1} \rho$, and this feature was the main motivation for studying the so called ``restricted" or ``modified" EP models \cite{Y16, Y17, LT03, Tan14}, where the nonlocal forcing term is replaced by a local or semi-local one. The regularity of the (original) Euler-Poisson equations in $n>1$ dimensions remains an outstanding open problem.

The goal of this paper is showing that, under a suitable assumption, two-dimensional Euler-Poisson system with attractive forcing can \emph{afford} to have global smooth solutions for a large set of initial configurations. In section \ref{section 2}, we seek the evolution of $\nabla \mathbf{u}$ and derive a closed ordinary differential equations (ODE) system which is nonlinear and nonlocal, and relate/review many previous works with the derived ODE system. In section \ref{section 3}, we discuss the motivation and highlights of the present work. In addition to this, we state our main result about global solutions to the EP system. The details of the proof of the main result are carried out in Sections \ref{section 4}. Some numerical examples are provided in Section \ref{section 5}, that illustrate the behavior of the solutions, $\Delta^{-1}(\rho - c_b)$ and $\nabla\Delta^{-1}(\rho - c_b)$.

$$$$

\section{Problem formulation and related works}\label{section 2}
In this work, we consider two-dimensional Euler-Poisson equations with attractive forcing \eqref{101}.
We are mainly concerned with a Riccati system that governs  $\nabla \mathbf{u}$.
In order to trace the evolution of $\mathcal{M}:=\nabla \mathbf{u}$, we differentiate \eqref{101b}, obtaining
\begin{equation}\label{Meqn_pre}
\partial_t \mathcal{M} + \mathbf{u} \cdot \nabla \mathcal{M} + \mathcal{M}^2 = k \nabla \otimes \nabla \Delta^{-1} (\rho -c_b) = k R[\rho-c_b],
\end{equation}
where $R[\cdot] $ is the $2 \times 2$ Riesz matrix operator, defined as
$$R[h]:=\nabla \otimes \nabla \Delta^{-1}[h]=\mathcal{F}^{-1}\bigg{\{} \frac{\xi_i \xi_j}{|\xi|^2} \hat{h}(\xi)  \bigg{\}}_{i,j=1,2} .$$

We let $\frac{D}{Dt}[\cdot] = [\cdot]'$ be the usual material derivative, $\frac{\partial}{\partial t} + \mathbf{u} \cdot \nabla$.
We are concerned with the initial value problem \eqref{Meqn_pre} or
\begin{equation}\label{Meqn}
\frac{D}{Dt}\mathcal{M} + \left(
                   \begin{array}{cc}
                     \mathcal{M}_{11}^2 + \mathcal{M}_{12}\mathcal{M}_{21} & (\mathcal{M}_{11}+\mathcal{M}_{22})\cdot \mathcal{M}_{12} \\
                      (\mathcal{M}_{11}+\mathcal{M}_{22}) \cdot \mathcal{M}_{21} & \mathcal{M}_{12}\mathcal{M}_{21} +\mathcal{ M}_{22}^2 \\
                   \end{array}
                 \right)
 = k\left(
                         \begin{array}{cc}
                           R_{11} [\rho-c_b]& R_{12} [\rho-c_b]\\
                           R_{21}[\rho -c_b] & R_{22} [\rho-c_b]\\
                         \end{array}
                       \right).
 \end{equation}
 subject to initial data
$$(\mathcal{M}, \rho)(0, \cdot) = (M_0 , \rho_0).$$
The global nature of the Riesz matrix $R[ \cdot ]$, makes the issue of regularity for Euler-Poisson equations such an intricate question to solve.

We introduce several quantities with which we characterize the behavior of the velocity gradient tensor $\mathcal{M}$. These are the trace $d:=\mathrm{tr} \mathcal{M} = \nabla \cdot \mathbf{u}$, the vorticity $\omega : = \nabla \times \mathbf{u} = \mathcal{M}_{21} - \mathcal{M}_{12}$ and quantities $\eta:= \mathcal{M}_{11} - \mathcal{M}_{22}$ and $\xi := \mathcal{M}_{12} + \mathcal{M}_{21}$. 
Taking the trace of \eqref{Meqn}, one obtain
\begin{equation}\label{Deqn}
\begin{split}
 d'&= - (\mathcal{M}^2 _{11} + \mathcal{M}^2 _{22}) -2\mathcal{M}_{12}\mathcal{M}_{21} + k (R_{11} [\rho-c_b]  + R_{22} [\rho-c_b])\\
&=-\bigg{\{ } \frac{(\mathcal{M}_{11} + \mathcal{M}_{22})^2}{2} +  \frac{(\mathcal{M}_{11} - \mathcal{M}_{22})^2}{2}   \bigg{\}} +  \frac{(\mathcal{M}_{21} - \mathcal{M}_{12})^2}{2} -  \frac{(\mathcal{M}_{12} + \mathcal{M}_{21})^2}{2}   + k(\rho-c_b)\\
&=-\frac{1}{2}d^2 -\frac{1}{2}\eta^2 + \frac{1}{2}\omega^2 -\frac{1}{2}\xi^2 + k(\rho-c_b).
\end{split}
\end{equation}

We can see that the equation \eqref{Deqn} is a Ricatti-type equation.
One can view the evolution of $d$ as the result of a contest between negative and positive terms in \eqref{Deqn}.  Indeed,  the vorticity accelerates divergence while $\eta$ and $\xi$ suppress divergence and enhance the finite time blow-up of a flow. The growth of $\eta$ and $\xi$ are related to the Riesz transform of density and non-locality of these terms make it difficult to study global solutions of the multidimensional EP system.

Our approach in this paper is to study the evolutions of $d=\nabla \cdot \mathbf{u}$ and it shall be carried out by tracing the dynamics of  $\eta$, $\omega$ and $\xi$. From matrix equation \eqref{Meqn}, and \eqref{101a}, we obtain
\begin{subequations}
    \begin{equation}\label{Eeqn}
  	 \eta' +\eta d =k(R_{11}[\rho-c_b] - R_{22}[\rho-c_b]),
    \end{equation}
    \begin{equation}\label{Veqn}
    	\omega' + \omega d = k(R_{21}[\rho-c_b]-R_{12}[\rho-c_b])=0,
    \end{equation}
    \begin{equation}\label{Xeqn}
     \xi' + \xi d = k (R_{12}[\rho-c_b]+R_{21}[\rho-c_b]),
    \end{equation}
     \begin{equation}\label{Reqn}
    \rho' + \rho d=0.
        \end{equation}
        \end{subequations}
Here, one can explicitly  calculate $R[\cdot]$, (see \cite{Y16} for the detailed calculations) i.e.,
\begin{equation}
(R_{i j}  [h])(\mathbf{x})  = p.v.\int_{\mathbb{R}^2} \frac{\partial^2}{\partial y_j \partial y_i}G(\mathbf{y})h(\mathbf{x} - \mathbf{y}) \, d \mathbf{y} +  \frac{h(\mathbf{x})}{2\pi}\int_{|\mathbf{z}| =1}   z_i z_j  \, d\mathbf{z},
\end{equation}
where $G(\mathbf{y})$ is the Poisson kernel in two-dimensions, and is given by
$$G(\mathbf{y}) = \frac{1}{2\pi} \log |\mathbf{y}|.$$
Due to the singular nature of the integral, we are lack of $L^{\infty}$ estimate of the $R_{ij}[\cdot]$.
        
From \eqref{Veqn} and \eqref{Reqn}, we derive
\begin{equation}\label{ome_rho}
\frac{\omega}{\omega_0}=\frac{\rho}{\rho_0}.
\end{equation}
One can also rewrite $\eta$ and $\xi$ in terms of $\rho$, by explicitly solving \eqref{Eeqn} and \eqref{Xeqn} (see \cite{Y16}), we obtain
\begin{equation}\label{ex_rho}
\eta(t) =  \bigg{(} \frac{\eta_0}{\rho_0}  + \int^t _0 \frac{f_1(\tau)}{\rho(\tau)} \, d\tau \bigg{)}\rho(t) \ and \ \xi(t) =   \bigg{(} \frac{\xi_0}{\rho_0}  + \int^t _0 \frac{f_2(\tau)}{\rho(\tau)} \, d\tau \bigg{)}\rho(t),
\end{equation}
where
\begin{equation}\label{f_1}
f_1(t):=k(R_{11}[\rho-c_b] - R_{22}[\rho-c_b]) = \frac{k}{\pi} p.v.\int_{\mathbb{R}^2} \frac{-y^2 _1 + y^2 _2}{(y^2 _1 + y^2 _2)^2} \rho(t, \mathbf{x}(t) - \mathbf{y} ) \, d\mathbf{y},
\end{equation}
and
\begin{equation}\label{f_2}
f_2(t):=k (R_{12}[\rho-c_b]+R_{21}[\rho-c_b]) = \frac{k}{\pi} p.v.\int_{\mathbb{R}^2} \frac{-2y_1 y_2}{(y^2 _1 + y^2 _2)^2} \rho(t, \mathbf{x}(t) - \mathbf{y} ) \, d\mathbf{y}.
\end{equation}
Here, all functions of consideration are evaluated along the characteristic, that is, for example,
$f_i (t)=f_i(t, \mathbf{x}(t))$ and $\eta(t)=\eta(t, \mathbf{x}(t))$, etc.

Using \eqref{ome_rho} and \eqref{ex_rho} we can rewrite \eqref{Deqn} in a manner that all non-localities are absorbed in the coefficient of $\rho^2$. That is, together with \eqref{Reqn}, we obtain closed system
\begin{equation}\label{ode1_intro}
\left\{
  \begin{array}{ll}
    d' = -\frac{1}{2}d^2 + A(t)\rho^2 + k(\rho -c_b), \\
    \rho' = -\rho d,\\
  \end{array}
\right.
\end{equation}
where
\begin{equation}\label{A_eqn}
A(t):=\frac{1}{2} \bigg{[} \bigg{(} \frac{\omega_0}{\rho_0} \bigg{)}^2 - \bigg{(} \frac{n_0}{\rho_0} + \int^t _0 \frac{f_1(\tau)}{\rho(\tau)} \, d \tau \bigg{)}^2 - \bigg{(} \frac{\xi_0}{\rho_0} + \int^t _0 \frac{f_2(\tau)}{\rho(\tau)} \, d \tau \bigg{)}^2  \bigg{]}.
\end{equation}

In this work, we are concern with \eqref{ode1_intro}, subject to initial data
$$(\nabla \mathbf{u}, \rho)(0,\cdot)=(\nabla \mathbf{u}_0, \rho_0).$$

To put our study in a proper perspective we recall several recent works in the form of \eqref{ode1_intro}. It turns out that many of so-called restricted/modified models can be reinterpreted within the scope of \eqref{ode1_intro}.

$\bullet$ Chae and Tadmor \cite{CT08}  proved the finite time blow-up for solutions of $k<0$ case, assuming \emph{vanishing initial vorticity}. Indeed, setting $\omega_0 =0$ in \eqref{A_eqn}  gives $A(t) \leq 0$, and this allows to derive 
$$d' \leq -\frac{1}{2}d^2 +  k(\rho -c_b).$$
Using this ordinary differential inequality, upper-threshold for finite time blow-up of solution was identified. Later Cheng and Tadmor \cite{CT09} improved the result of \cite{CT08} using the delicate ODE phase plane argument.

$\bullet$  Liu and Tadmor \cite{LT02, LT03} introduced the restricted Euler-Poisson (REP) system which is obtained from \eqref{Meqn} by restricting attention to the local isotropic trace $\frac{k}{2} (\rho -c_b) I_{2 \times 2}$ of the global coupling term $kR[\rho- c_b]$. One can also obtain the REP by letting $f_i \equiv 0$, $i=1,2$ in \eqref{A_eqn}. That is,
$$d' = -\frac{1}{2}d^2 + \frac{\beta}{2}\rho^2 + k(\rho -c_b), \ \ \beta=\frac{\omega^2 _0 - \eta^2 _0 - \xi^2 _0}{\rho^2 _0}.$$
Thus, $A(t) \equiv  \frac{\beta}{2}$ in REP. The dynamics of $(\rho, d)$ of this ``localized" EP system was studied, and it was shown that in the repulsive case, the REP system admits so called critical threshold phenomena.

$\bullet$ Slight generalization of the REP was introduced in \cite{Y17}. This ``weakly" restricted EP can also be obtained by letting $f_1\equiv 0$ \emph{only} in \eqref{A_eqn}. Indeed, $f_1\equiv 0$ implies $A(t) \leq \frac{1}{2}[(\frac{\omega_0}{\rho_0})^2 - (\frac{\eta_0}{\rho_0})^2]$ and
$$d' \leq -\frac{1}{2}d^2 + \frac{\alpha}{2}\rho^2 + k(\rho -c_b), \ \ \alpha=\frac{\omega^2 _0 - \eta^2 _0 }{\rho^2 _0}.$$
Threshold conditions for finite time blow-up were identified for attractive and repulsive cases.

$\bullet$ While the dynamics of $d$ in the above reviewed models are governed by \emph{local} quantities, the model in \cite{Y16} strives to maintain some \emph{global} nature of $A(t)$. That is, the author assumed that
$$\bigg{|} \int^t _0 \frac{f_i(\tau)}{\rho(\tau)} \, d \tau \bigg{|} \leq C \int^t _0 \frac{1}{\rho(\tau)} \, d \tau, \ \ i=1,2$$
for some constant $C$, and obtained upper-thresholds for finite time blow-up for attractive and repulsive cases.

$\bullet$  Tan \cite{Tan14} assumed that
$$\bigg{|}\frac{f_i (t)}{\rho(t)}  \bigg{|} \leq C, \ \ i=1,2$$
and proved a global existence of solution for  repulsive case using some scaling argument. The above assumption is equivalent to $A(t)\equiv -C(t+D)^2$ for some constant $D$.\\

In all above mentioned restricted/modified models and the vanishing vorticity case, as well as the current work, the difficulty associated with having Riesz transforms is not present. This issue remains open. We should point out that, however, all these works have strived to advance the understanding on EP system through \emph{more general} $A(t)$ which  \emph{further amplifies} the blow-up behavior by the first term $-\frac{1}{2}d^2$ in \eqref{ode1_intro}. In this paper, we are concerned with $A(t) \sim -e^t$ case and sub-critical configurations which admit global smooth solutions.

$$$$

\section{ Main result and remarks }\label{section 3}
We first address some motivation of this work.  The difficulty lies with the nonlocal/singular nature of the Riesz transform, which fails to map $L^{\infty}$ data to $L^{\infty}$. Thus, main obstacle in handling the dynamics of $d$ in \eqref{ode1_intro} is the lack of an accurate description for the propagations of $f_i (t, \mathbf{x}(t))$ in \eqref{f_1} and \eqref{f_2}. This, in turn, makes difficult to answer the questions of global regularity versus finite-time breakdown of solutions for \eqref{ode1_intro}.

From \eqref{A_eqn}, we know the initial value, and the uniform \emph{upper bound} of $A(t)$. That is,

$$A(0)=\frac{1}{2} \bigg{[} \bigg{(} \frac{\omega_0}{\rho_0} \bigg{)}^2 - \bigg{(} \frac{n_0}{\rho_0}  \bigg{)}^2 - \bigg{(} \frac{\xi_0}{\rho_0}  \bigg{)}^2  \bigg{]}$$
and
\begin{equation}\label{A_upper}
A(t) \leq \frac{1}{2} \bigg{(} \frac{\omega_0}{\rho_0} \bigg{)}^2, \ for \ all \ t \geq 0,
\end{equation}
as long as $A(t)$ exists.

However, we do not know if there exists any lower bound of $A(t)$. It is possible that $A(t) \rightarrow -\infty$ in finite/infinite time or remains uniformly bounded below for all time. This is because, as mentioned earlier, there is no $L^\infty$ bound of $f_i$. For each fixed $t$, we know that  $f_i (t,\cdot) \in \mathrm{BMO}(\mathbb{R}^2)$ (bounded mean oscillation, see e.g. \cite{S93}), and this implies that
$$f_i (t,\cdot) \in L^p _{loc} (\mathbb{R}^2), \ \ 1\leq p < \infty.$$ 

Since $A(t)$ is bounded above, we are left with only two possible cases
 under non-vacuum condition $\rho_0 >0$ (thus $\rho(t) >0$ from the second equation of \eqref{ode1_intro}):
 
\textbf{Case I:} Finite time blow-up of $A(t)$. That is, $$\lim_{t \rightarrow T -} A(t) = -\infty,$$ where $T < \infty$. This corresponds to 
$$\lim_{t \rightarrow T -}\bigg{|} \int^t _0 f_i (\tau, \mathbf{x}(\tau)) \, d\tau \bigg{|} = \infty,$$
and this is possible because for each $t$, $f_i (t, \cdot)$ need not be locally bounded, so  $f_i(t, \mathbf{x}(t))$ can be unbounded along some part of the characteristic.  In this case, we can easily see that  $$\lim_{t \rightarrow T -} d(t) = -\infty$$ as well, and $\rho, d$ blow-up in finite time. Indeed, suppose not, then since $T <\infty$, 
$$\rho(T) = \rho_0 e^{-\int^{T} _0 d(\tau, \mathbf{x}(\tau)) \, d \tau} >\epsilon,$$
for some $\epsilon>0$. Applying this and $\lim_{t \rightarrow T -} A(t) = -\infty$ to \eqref{ode1_intro}, we obtain $\lim_{t \rightarrow T -} d' (t) = -\infty$ and this is contradiction.

\textbf{Case II:} $A(t)$ uniformly bounded below, or blows-up at infinity. That is, there exists some function  $h(t)$ such that
$A(t) \geq h(t)$, for all $t\geq 0$ and 
$$h(t) \rightarrow -\infty \  as \  t \rightarrow \infty.$$ 

The main contribution of this work is investigating \eqref{ode1_intro} under the condition in Case II, and we show that the Riccati structure  can afford to have global solutions even if $A(t) \rightarrow -\infty$, depending on $A(t)$'s rate of decreasing. More precisely, we show that the nonlinear-nonlocal system \eqref{ode1_intro} admits global smooth solutions for a large set of initial configurations provided that
\begin{equation}\label{exp_condi_1}
A(t) \geq -\alpha e^{\beta t}, \ for \ all \ t,
\end{equation}
where $\alpha$ and $\beta$ are some positive constants.



From now on, in \eqref{ode1_intro}, we assume that $k=-1$ and $c_b=1$, because these constants are not essential in our analysis. Also, we set $\alpha=\beta=1$. We shall consider 
\begin{equation}\label{ode2_intro}
\left\{
  \begin{array}{ll}
    d' = -\frac{1}{2}d^2 + A(t)\rho^2 - (\rho -1), \\
    \rho' = -\rho d,\\
  \end{array}
\right.
\end{equation}
subject to initial data $(\nabla \mathbf{u}, \rho)(0,\cdot)=(\nabla \mathbf{u}_0, \rho_0),$
where
\begin{equation}\label{A_eqn_2}
A(t):=\frac{1}{2} \bigg{[} \bigg{(} \frac{\omega_0}{\rho_0} \bigg{)}^2 - \bigg{(} \frac{n_0}{\rho_0} + \int^t _0 \frac{f_1(\tau)}{\rho(\tau)} \, d \tau \bigg{)}^2 - \bigg{(} \frac{\xi_0}{\rho_0} + \int^t _0 \frac{f_2(\tau)}{\rho(\tau)} \, d \tau \bigg{)}^2  \bigg{]},
\end{equation}
with assumption that
\begin{equation}\label{exp_condi_2}
A(t) \geq - e^{ t}, \ for \ all \ t.
\end{equation}

Here, we note that  \eqref{exp_condi_2} already assume that $A(0)\geq-1$, that is,
$$\frac{1}{2} \bigg{[} \bigg{(} \frac{\omega_0}{\rho_0} \bigg{)}^2 - \bigg{(} \frac{n_0}{\rho_0}  \bigg{)}^2 - \bigg{(} \frac{\xi_0}{\rho_0}  \bigg{)}^2  \bigg{]} \geq-1.$$
But this does not restrict our result, since one can always find $\alpha$ and $\beta$ that satisfy \eqref{exp_condi_1} for any $A(0)$.

To present our result, we write \eqref{Meqn_pre} or \eqref{Meqn} here again, with the second equation of \eqref{ode2_intro}, to establish the two-dimensional Euler-Poisson system:
\begin{equation}\label{EP_system}
\left\{
  \begin{array}{ll}
    \mathcal{M}' + \mathcal{M}^2 =-R[\rho -1], \\
    \rho' = -\rho\cdot \mathrm{tr}(\mathcal{M}),\\
  \end{array}
\right.
\end{equation}
subject to initial data $(\mathcal{M},\rho)(0,\cdot)=(\mathcal{M}_0 , \rho_0)$.
We note that the global regularity follows from the standard boot-strap argument, once an \emph{a priori} estimate on $\| \mathcal{M}(\cdot, \cdot) \|_{L^{\infty}}$ is obtained. Also, $\mathcal{M}=\nabla \mathbf{u}$ is completely controlled by $d=\nabla \cdot \mathbf{u}$ and $\rho$, see \cite{LT03}:
$$\|\mathcal{M}(t,\cdot) \|_{L^{\infty}[0,T]} \leq C_T \cdot \| (\mathrm{tr}\mathcal{M} , \rho) \|_{L^{\infty} [0,T]}.$$


\begin{figure}[ht]
\begin{center}
\includegraphics[width=160mm]{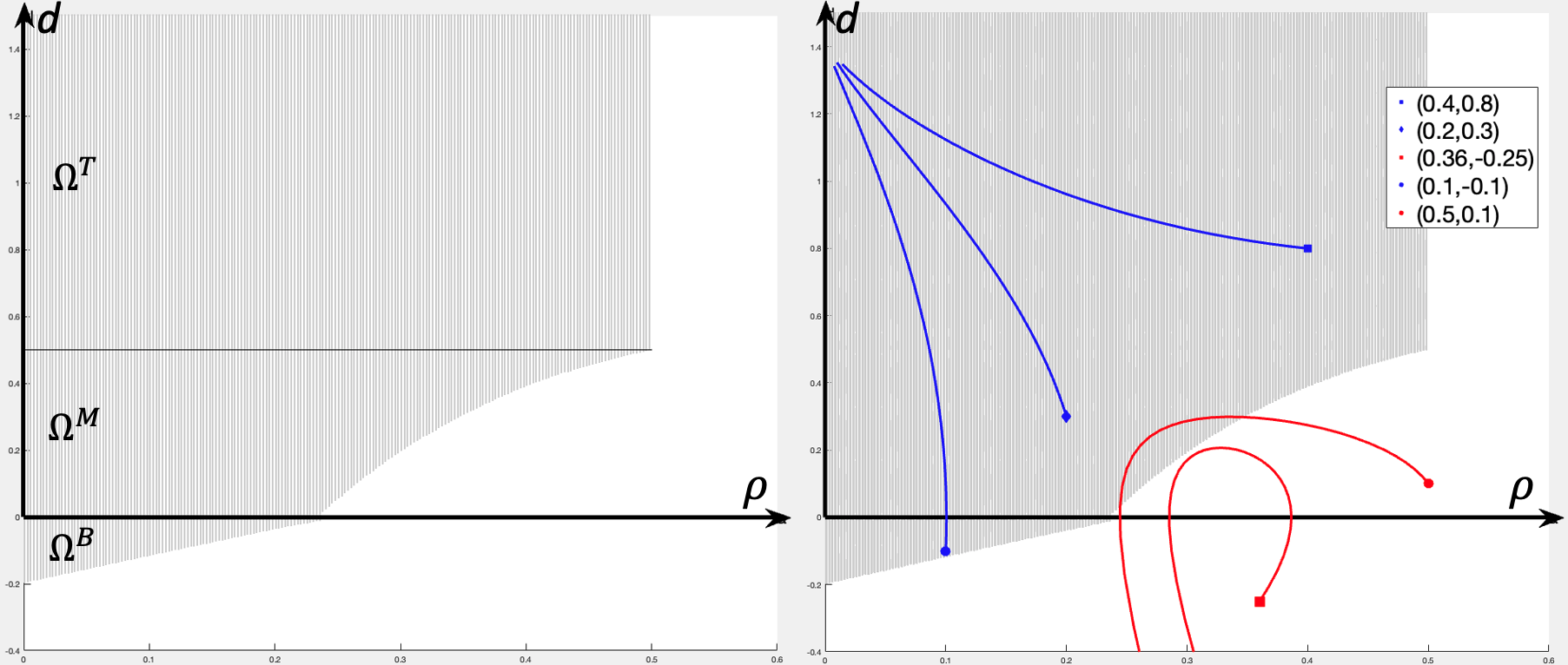}
\end{center}
\caption{Left: The sub-critical data sets $\Omega^T$, $\Omega^M$ and $\Omega^B$ in Theorem \ref{thm_main}, Right: When $A(t)\equiv -e^t$, some numerically calculated solutions of \eqref{ode2_intro} with several initial configurations are plotted. Here, the blue trajectories denote globally regular solutions, whereas the red trajectories denote blow-up solutions.}\label{ot_om_ob}
\end{figure}

Our goal of this work is to prove the following result.

\begin{theorem}\label{thm_main}
Consider the Euler-Poisson system,  \eqref{EP_system}  with \eqref{exp_condi_2}. 
If 
$$(\rho_0, d_0) \in \Omega^T \cup \Omega^M \cup \Omega^B,$$
 then the solution of the Euler-Poisson system remains smooth for all time. Here,
 $$\Omega^T : =  \{(\rho,d) \ | \ 0 <\rho <1/2, \ d \geq 1/2 \},$$
$$\Omega^{M} : = \bigg{\{}(\rho,d) \ \bigg{|} \ 0 <\rho < 1/2, \ \max\bigg{\{} 0, \frac{1}{2} - \bigg{(}\frac{3}{8}  - \frac{\rho}{2}\bigg{)}\log\bigg{[} \frac{1}{2} \big{(} \frac{1}{\rho^2}  - \frac{1}{\rho} \big{)} \bigg{]} \bigg{\}} < d \leq \frac{1}{2} \bigg{\}},$$
and
$$\Omega^{B} := \bigg{\{ } (\rho,d) \ \bigg{|} \ 0 <\rho <1/2, \ \rho -\frac{1}{2} < d <0,   \ \frac{(\frac{1}{2} -d)}{\{ \frac{3}{8} -\frac{1}{2}(\rho - d)  \}}\leq \log \bigg{[} \frac{1}{2} \bigg{(}  \frac{1}{(\rho  - d)^2} -\frac{1}{\rho- d}\bigg{)}  \bigg{]} \bigg{\}},$$
as shown in $\mathrm{Figure}$ \ref{ot_om_ob}.
 \end{theorem}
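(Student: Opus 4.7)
The plan is to reformulate the problem as a 3D autonomous differential-inequality system and then to exhibit forward-invariant tubes that absorb each of the three sub-critical regions. The only way the assumption \eqref{exp_condi_2} enters is through the scalar lower bound
\[d' \;\geq\; -\tfrac{1}{2}d^2 - e^t\rho^2 - \rho + 1\]
coupled with the exact equation $\rho' = -\rho d$. Introducing the auxiliary variable $L(t):=e^t\rho(t)^2$, which satisfies $L(0)=\rho_0^2$ and $L' = L(1-2d)$, the problem becomes an autonomous 3D system in $(\rho,d,L)$ consisting of the above inequality together with two exact ODEs. In view of the $L^\infty$-estimate for $\mathcal M$ in terms of $(d,\rho)$ recalled below \eqref{EP_system}, the theorem will follow once I produce, for initial data in each of $\Omega^T$, $\Omega^M$, $\Omega^B$, a forward-invariant set in $(\rho,d,L)$-space, containing the base point $(\rho_0,d_0,\rho_0^2)$, on which $(d,\rho)$ stays bounded on every finite interval.

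The case $\Omega^T$ is the model case. I show that
\[\mathcal R^T \;:=\; \{\,0<\rho<\tfrac12,\ d\geq\tfrac12,\ 0<L\leq\rho_0^2\,\}\]
is forward invariant. On $\mathcal R^T$ one has $\rho'<0$ and $L'\leq 0$, so the only face on which the flow could exit is $\{d=\tfrac12\}$; there, using $\rho<\tfrac12$ and $L\leq\rho_0^2<\tfrac14$,
\[d' \;\geq\; -\tfrac18 - L - \rho + 1 \;>\; -\tfrac18 - \tfrac14 - \tfrac12 + 1 \;=\; \tfrac18,\]
so the flow is strictly inward and $\mathcal R^T$ traps the trajectory. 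Global smoothness then follows from the $L^\infty$-bound on $\mathcal M$.

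For $\Omega^M$ and $\Omega^B$ the idea is absorption: every trajectory enters $\mathcal R^T$ in finite time, and remains bounded in the interim. In $\Omega^M$, $d_0\in(0,\tfrac12]$ and $\rho$ is decreasing; the worst-case scenario is that $d$ stays below $\tfrac12$ for too long, letting the growth factor $(1-2d)$ in $L'$ inflate $L$. Bootstrapping with the running minimum of $d$ to bound $L$, then substituting the bound into the $d$-inequality and reparameterizing by $\rho$ via the characteristic identity $dd/d\rho = (d^2/2 + L + \rho - 1)/(\rho d)$, I expect to produce a lower barrier for $d$ of exactly the form $\tfrac12 -(\tfrac38-\rho/2)\log\bigl[\tfrac12(\rho^{-2}-\rho^{-1})\bigr]$; initial data satisfying the $\Omega^M$ inequality sit above this barrier, which feeds the trajectory into $\{d=\tfrac12\}$, after which the $\Omega^T$ argument kicks in. For $\Omega^B$, the substitution $u:=\rho-d$ (with $u\in(0,\tfrac12)$ throughout the $d<0$ phase) restores the structure of the $\Omega^M$ analysis, which is precisely why the $\Omega^B$ inequality is obtained from the $\Omega^M$ inequality by replacing $\rho$ by $\rho-d$ in the logarithmic factor and $\tfrac12$ by $\tfrac12-d$ in the numerator.

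The hardest step is the $\Omega^B$ case. During the initial phase $d<0$ both $\rho'>0$ and $L'>0$, so the argument must simultaneously control (i) that $d$ crosses zero in finite time, (ii) that $u=\rho-d$ remains strictly below $\tfrac12$ throughout, and (iii) that the accumulated growth of $L$ is still dominated by $1-\rho$ by the time $d$ reaches $\tfrac12$. Matching the exponentially growing bound for $L$ against the logarithmic growth of $t$ in $\log(\rho_0/\rho)$ coming from $\rho=\rho_0 e^{-\int d}$ is what produces the specific coefficients $\tfrac38$ and $\tfrac12$ in the boundary curves, and the sharp verification that these particular curves are exactly forward-invariant under the 3D system is the delicate bookkeeping step that the full proof will have to carry out.
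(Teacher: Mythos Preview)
Your substitution $L=e^{t}\rho^{2}$, with $L'=(1-2d)L$, is essentially the paper's third variable $B=e^{t}$ after the rescaling $L=Ba^{2}$; so the two approaches live in the same 3D autonomous phase space. The paper, however, organizes the argument differently: it first studies the \emph{exact} auxiliary system $\dot b=-\tfrac12 b^{2}-Ba^{2}-a+1,\ \dot a=-ba,\ \dot B=B$ (that is, the worst case $A\equiv -e^{t}$) and only afterwards transfers the bounds to $(\rho,d)$ via a separate comparison lemma (if $b(0)<d(0)$ and $\rho(0)<a(0)$ then $b(t)<d(t)$ and $\rho(t)<a(t)$). Working with the exact auxiliary ODE lets the paper use two-sided information (e.g.\ $b$ strictly increasing, hence $a$ strictly decreasing), whereas your direct differential-inequality route only ever gives one-sided bounds on $d$. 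In this particular problem that one-sidedness is harmless, but the comparison-lemma packaging is what makes the $\Omega^{M}$ and $\Omega^{B}$ estimates clean.

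There is a concrete gap in your scheme. The invariant tube $\mathcal R^{T}=\{0<\rho<\tfrac12,\ d\ge\tfrac12,\ L\le\rho_{0}^{2}\}$ works for initial data in $\Omega^{T}$, but it \emph{cannot} absorb trajectories coming from $\Omega^{M}$ or $\Omega^{B}$: while $d<\tfrac12$ one has $L'=(1-2d)L>0$, so by the time $d$ reaches $\tfrac12$ the value of $L$ has strictly exceeded $\rho_{0}^{2}$ and the trajectory is outside $\mathcal R^{T}$. The correct forward-invariant surface is the curved one $L=\tfrac12(1-\rho)$, which in the paper's variables is exactly the surface $S_{1}\colon B=\tfrac12(a^{-2}-a^{-1})$. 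This surface is what produces the logarithm in the $\Omega^{M},\Omega^{B}$ boundaries: the number $t^{*}=\log\bigl[\tfrac12(\rho_{0}^{-2}-\rho_{0}^{-1})\bigr]$ is precisely the earliest possible time at which $L$ could reach $\tfrac12(1-\rho)$, and the paper shows (i) that while $L<\tfrac12(1-\rho)$ and $d\le\tfrac12$ one has the uniform rate $\dot d\ge\tfrac38-\tfrac12\rho_{0}$ (resp.\ $\tfrac38-\tfrac12(\rho_{0}-d_{0})$ in the $d_{0}<0$ case), and (ii) that the condition $(\tfrac12-d_{0})/(\tfrac38-\tfrac12\rho_{0})\le t^{*}$ is exactly what guarantees $d$ hits $\tfrac12$ before the $S_{1}$ barrier can be touched. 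Your sketch (``bootstrapping with the running minimum of $d$'', ``reparameterizing by $\rho$'') does not identify this barrier or this time scale, and the reparameterization $dd/d\rho$ alone cannot close the argument because $L$ is an independent variable that is not a function of $(\rho,d)$. Once you replace $L\le\rho_{0}^{2}$ by $L\le\tfrac12(1-\rho)$ and carry the two estimates (i)--(ii), your inequality approach will go through and is then essentially the paper's proof, minus the comparison lemma.
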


\begin{figure}[ht]\begin{center}
\includegraphics[width=160mm]{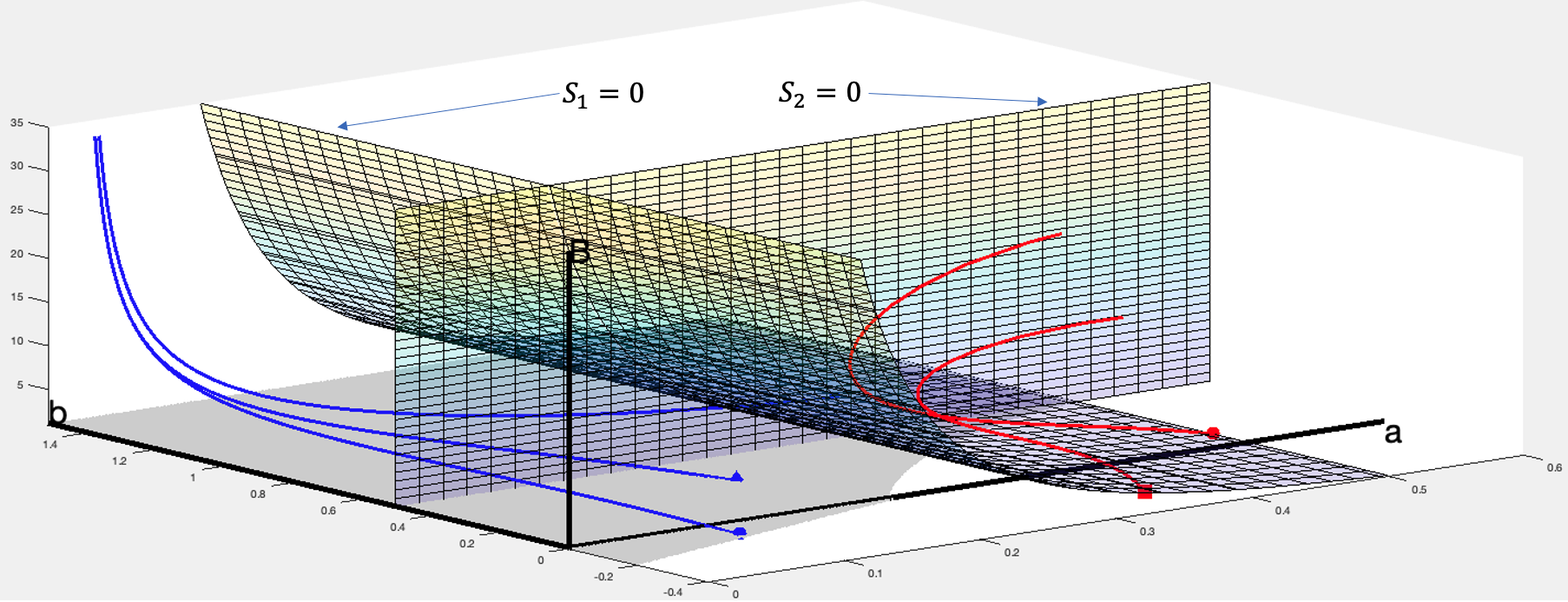}
\caption{Plotting of some solutions $(\rho(t), d(t), e^t)$ with initial data in Figure \ref{ot_om_ob} (when $A(t)\equiv -e^{t}$). Here, the gray region denotes $\Omega^T \cup \Omega^M \cup \Omega^B$ in  Figure \ref{ot_om_ob}. Surfaces $S_1 =0$ and $S_2 =0$ determine a 3D invariant space for the auxiliary system mentioned in the remark. Later, we extend the invariant space to obtain the desired result.}\label{invariant}
\end{center}
\end{figure}

\textbf{Remarks.} Some remarks are in order at this point.

1. We note that $A(t) \geq - e^t$ \emph{further amplifies} the blow-up behavior by the first term $-\frac{1}{2}d^2$ in \ref{ode2_intro}. Thus, the main contribution of the theorem is that the Riccati structure \ref{ode2_intro} affords to have global solutions even though $A(t) \rightarrow -\infty$ at infinity, as long as the the rate is not greater than exponential. In addition to this, the set of initial configurations in the theorem contains  negative divergences.

2. As discussed in Case I, an unconditional finite time blow ups of $\rho$ and $d$ can occur if $A(t) \rightarrow -\infty$ in finite time. Theorem \ref{thm_main} supports that an unconditional finite time blow up cannot be ignited by exponential time growth of $|A(t)|$.

3. In Figure \ref{ot_om_ob}, the subcritical initial data sets are plotted in $\rho$-$d$ plane. When $A(t)\equiv -e^t$, some numerically calculated solutions of \ref{ode2_intro} are plotted from various initial data. One can see that the trajectories initiated from gray region converge to $(0,\sqrt{2})$, while the red trajectories go to $(\infty, -\infty)$ in finite time. It is interesting to see that the red trajectory from $(0.5, 0.1)$ enters the gray region, but it does blow up, due to the  time dependent coefficient $A(t)$ in \eqref{ode2_intro}.

4. In order to prove the theorem, we introduce a $3 \times 3$ auxiliary system
\begin{equation*}
\left\{
  \begin{array}{ll}
    \dot{b} = -\frac{1}{2}b^2 - B a^2 + k(a -1), \\
    \dot{a} = -ba,\\
    \dot{B} =B,
  \end{array}
\right.
\end{equation*}
(with $k=-1$) and find a three-dimensional invariant space of the system, where all trajectories if they start from inside this space will stay encompassed at all time, see Figure \ref{invariant}. Then, we compare the auxiliary system with the original system \eqref{ode2_intro}. The key parts of the proof are constructing the surfaces $S_1 =0$ and $S_2 =0$ in Figure \ref{invariant} that determine the three-dimensional invariant space of the auxiliary system, extending the invariant space, and establishing monotonicity between the auxiliary system and the original system via comparison. 

5. The aforementioned comparison argument (Lemma \ref{lemma_comp}) between the auxiliary system and the original system does not work for the \emph{repulsive forcing} case, $k=1$. 





$$$$

\section{Proof of Theorem \ref{thm_main}}\label{section 4}

We start this section by considering the following nonlinear ODE system with the time dependent coefficient,
\begin{equation}\label{ode_exp}
\left\{
  \begin{array}{ll}
    \dot{b} = -\frac{1}{2}b^2 - e^t a^2 -a +1, \\
    \dot{a} = -ba.\\
  \end{array}
\right.
\end{equation}

Setting $B(t)=e^t$, one can rewrite the system as follows:
\begin{equation}\label{3by3_exp}
\left\{
  \begin{array}{ll}
    \dot{b} = -\frac{1}{2}b^2 - B a^2 -a +1, \\
    \dot{a} = -ba,\\
    \dot{B} =B,
  \end{array}
\right.
\end{equation}
with $(a , b , B)\big{|}_{t=0} = (a_0 , b_0 , B_0 =1).$

We shall find a set of initial data for which the solution of \eqref{3by3_exp} exists for all time. We first define $\Omega_{abB}$ space as
$$\Omega_{abB}:=\{(a,b,B) \ | \ a\in (0, \infty), \ b \in (-\infty, \infty) \ and \ B \in [1, \infty)  \}.$$
In addition to this, we let 
$$\vec{l}(t):=\langle \dot{a}(t), \dot{b}(t), \dot{B}(t) \rangle.$$
That is, $\vec{l}(t)$ is the tangent vector of the trajectory or solution $(a(t), b(t), B(t))$.

We now define $S_1 =0 $ and $S_2 =0$ surfaces in $\Omega_{abB}$,  as shown in Figure \ref{invariant} :
First,
$$S_1 (a,b,B):= \frac{1}{2} \bigg{(}\frac{1}{a^2}  -\frac{1}{a} \bigg{)} -B =0.$$
Since $B\geq 1$, we define $S_1 =0$ surface only when $0<a\leq \frac{1}{2}$.

Next, we let
$$S_2 (a,b,B):= b-\frac{1}{2}=0.$$
We first show that these two surfaces form an invariant space of \eqref{3by3_exp}. 

On $S_1 =0$ surface,  it holds $\nabla S_1 \cdot \vec{l}(t) >0$, provided that $b \geq \frac{1}{2}$. Indeed,
\begin{equation*}
\begin{split}
\nabla S_1 \cdot \vec{l}(t) &= \bigg{\langle} -\frac{1}{a^3} + \frac{1}{2a^2}, 0, -1 \bigg{\rangle} \cdot \langle -ba, -\frac{1}{2}b^2 - B a^2 -a +1, B \rangle\\
&=\frac{b}{a^2} - \frac{b}{2a} -B.
\end{split}
\end{equation*}
Thus, on $S_1 =0$ surface, we have
\begin{equation}\label{s1_grad}
\nabla S_1 \cdot \vec{l}(t) \bigg{|}_{S_1 =0} = \frac{b}{a^2} - \frac{b}{2a} - \frac{1}{2a^2} + \frac{1}{2a}=\frac{1}{2a^2}\big{(}2b -ab -1 +a \big{)}>0,
\end{equation}
provided that $0<a \leq \frac{1}{2}$ and $b \geq \frac{1}{2}$.

On $S_2 =0$ surface, we consider
\begin{equation*}
\begin{split}
\nabla S_2 \cdot \vec{l}(t) &= \langle 0, 1, 0 \rangle \cdot \langle -ba, -\frac{1}{2}b^2 - B a^2 -a +1, B \rangle\\
&=-\frac{1}{2}b^2 -Ba^2 -a +1.
\end{split}
\end{equation*}
Thus, if $B \leq \frac{1}{2} \big{(}\frac{1}{a^2}  -\frac{1}{a} \big{)}$ and $0 < a \leq \frac{1}{2}$, then on $S_2 =0$ surface,
 \begin{equation}\label{s2_grad}
\nabla S_2 \cdot \vec{l}(t) \bigg{|}_{S_2 =0} \geq -\frac{1}{2} \bigg{(}\frac{1}{2} \bigg{)}^2 - \frac{a^2}{2} \bigg{(} \frac{1}{a^2} -\frac{1}{a} \bigg{)} -a + 1 = \frac{3}{8} - \frac{a}{2} > 0.
\end{equation}

Hence by \eqref{s1_grad} and \eqref{s2_grad}, we determine the invariant space:
\begin{equation}\label{omega_0}
\Omega_0:=\bigg{\{} (a,b,B) \ \bigg{|} \ 0 < a \leq \frac{1}{2} , \ b\geq \frac{1}{2},  \ B \leq \frac{1}{2} \big{(}\frac{1}{a^2}  -\frac{1}{a} \big{)} \bigg{\}}.
\end{equation}

Now, the following lemma is elementary, but useful to extend the invariant space.
\begin{lemma}\label{b_inc}
If $B \leq \frac{1}{2} \big{(}\frac{1}{a^2}  -\frac{1}{a} \big{)}$  and $b^2 < -a +1$, then
$$\dot{b}(t) >0.$$
\end{lemma}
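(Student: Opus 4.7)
The plan is to use the formula for $\dot b$ from the auxiliary system \eqref{3by3_exp} directly and bound it below using the two hypotheses. First I would write
\begin{equation*}
\dot b = -\tfrac12 b^2 - B a^2 - a + 1,
\end{equation*}
and estimate the $-Ba^2$ term from below using the hypothesis $B \leq \tfrac12\bigl(\tfrac{1}{a^2} - \tfrac{1}{a}\bigr)$. This gives
\begin{equation*}
Ba^2 \leq \tfrac12(1-a),
\end{equation*}
so $-Ba^2 \geq -\tfrac12 + \tfrac{a}{2}$, which is exactly the reverse of the $-a+1$ term up to a factor of $\tfrac12$.

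Then I would substitute this lower bound into the expression for $\dot b$, which should collapse into a clean expression of the form $\tfrac12(-b^2 - a + 1)$. The final step is to invoke the second hypothesis $b^2 < -a + 1$ to conclude positivity. The whole argument is a single chain of inequalities; there is no real obstacle, and no invariant-space or ODE-comparison machinery is needed here. The role of the lemma is simply to package these algebraic bounds for later use when extending $\Omega_0$ in \eqref{omega_0}, since the condition $b^2 < -a + 1$ is weaker than $b \geq \tfrac12$ combined with the other constraints, and the monotonicity $\dot b > 0$ will be the mechanism by which trajectories that initially lie outside $\{b \geq \tfrac12\}$ are driven into the invariant region.
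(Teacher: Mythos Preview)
Your proposal is correct and follows exactly the same route as the paper: substitute the bound $B \leq \tfrac12\bigl(\tfrac{1}{a^2}-\tfrac{1}{a}\bigr)$ into $\dot b = -\tfrac12 b^2 - Ba^2 - a + 1$ to obtain $\dot b \geq -\tfrac12(b^2 + a - 1)$, then invoke $b^2 < -a+1$. (One small inaccuracy in your closing commentary: the lemma is applied in the paper when $b \leq \tfrac12$, not to extend beyond $b \geq \tfrac12$; its purpose is to push trajectories starting with $b_0 < \tfrac12$ upward into $\Omega_0$.)
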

\begin{proof} Consider
\begin{equation}\label{b_dot_ineq}
\begin{split}
\dot{b}(t) = -\frac{1}{2}b^2 - B a^2 -a +1 \geq -\frac{1}{2}b^2 -\frac{a^2}{2} \bigg{(}\frac{1}{a^2} - \frac{1}{a}  \bigg{)} -a +1 =-\frac{1}{2}(b^2 +a -1)>0,
\end{split}
\end{equation}
provided that $b^2 < -a+1$. This completes the proof.
\end{proof}

\bigskip

\textbf{(i)} $\mathbf{b_0\geq 0}$ \textbf{case:} We find $a_0 \in (0, \frac{1}{2})$ and $b_0 \in [0, \frac{1}{2}]$ such that the solution of \eqref{3by3_exp} exists for all time. In the following series of lemmata, we show that there exists $(a_0 , b_0, B_0 =1)$ (under the $S_1 =0$ surface) such that the trajectory $(a(t), b(t), B(t))$ from it stays under the $S_1=0$ surface during a short time interval. In addition to this, we find a positive minimum rate of change of $b(t)$, which in turn ensures $(a(t), b(t), B(t)) \in \Omega_0$ for some $t >0$.

\begin{lemma}\label{t_star}
Let $0 < a_0 < \frac{1}{2}$ and $0\leq b_0 \leq \frac{1}{2}$. Consider the solution $(a(t) , b(t), B(t))$ of \eqref{3by3_exp} with initial data $(a_0 , b_0 , B_0 =1)$. It holds that 
$$B(t) < \frac{1}{2} \bigg{(} \frac{1}{a^2 (t)} - \frac{1}{a(t)} \bigg{)},$$
for $t \in [0, t^*)$, or $(a(t), b(t), B(t)) \in \Omega_0$, or both. Here
$$t^* := \log \bigg{[} \frac{1}{2} \bigg{(}  \frac{1}{a^2 _0} -\frac{1}{a_0}\bigg{)}  \bigg{]}.$$
\end{lemma}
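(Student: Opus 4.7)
The plan is a continuity/bootstrap argument based on two possible exit times. Set
\[
t_1:=\inf\{t\ge 0 : b(t)\ge \tfrac{1}{2}\},\qquad
t_2:=\inf\{t\ge 0 : B(t)\ge \tfrac{1}{2}(\tfrac{1}{a(t)^2}-\tfrac{1}{a(t)})\},
\]
with the convention that either equals $+\infty$ if its set is empty. At $t=0$ both inequalities are strict: $b_0<1/2$ (the boundary case $b_0=1/2$ places the initial data directly in $\Omega_0$ and is nothing to prove), and $B_0=1<\tfrac{1}{2}(\tfrac{1}{a_0^2}-\tfrac{1}{a_0})$ is equivalent to $2a_0^2+a_0-1<0$, which holds for $a_0\in(0,1/2)$. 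So $t_1,t_2>0$, and standard ODE theory with the a priori bounds $|b|\le 1/2$ and $a_0 e^{-t/2}\le a(t)\le a_0$ gives existence of the solution on a neighborhood of $[0,\min(t_1,t_2,t^*)]$.

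Next, I would apply Lemma~\ref{b_inc} on $[0,\min(t_1,t_2))$. There the $B$-hypothesis is in force, and $b<1/2$ together with $a\le a_0<1/2$ gives $b^2<1/4<1-a$. Hence $\dot b>0$ throughout, so $b$ strictly increases from $b_0\ge 0$ and in particular $b\ge 0$; this forces $\dot a=-ba\le 0$, so $a(\cdot)$ is non-increasing. Since $g(a):=\tfrac{1}{a^2}-\tfrac{1}{a}$ satisfies $g'(a)=(a-2)/a^3<0$ on $(0,2)$, the map $t\mapsto\tfrac{1}{2} g(a(t))$ is non-decreasing and therefore bounded below by $\tfrac{1}{2} g(a_0)=e^{t^*}$.

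Finally I would split into two cases. If $t_1\le t_2$: either $t_1=\infty$, in which case $b<1/2$ holds for all time and the monotonicity above yields the strict inequality $B(t)<\tfrac{1}{2} g(a(t))$ on $[0,\infty)\supseteq[0,t^*)$; or $t_1<\infty$, in which case continuity gives $b(t_1)=1/2$, together with $a(t_1)\le a_0<1/2$ and $B(t_1)\le\tfrac{1}{2} g(a(t_1))$, placing $(a,b,B)(t_1)\in\Omega_0$. If instead $t_2<t_1$, then at $t_2$ one has $e^{t_2}=B(t_2)=\tfrac{1}{2} g(a(t_2))\ge e^{t^*}$, forcing $t_2\ge t^*$, and hence the strict inequality $B(t)<\tfrac{1}{2} g(a(t))$ holds on $[0,t^*)\subseteq[0,t_2)$. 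In every case the lemma is proved. The only real difficulty is threading the strict vs.\ non-strict inequalities at the two potential exit times correctly; no delicate estimate is required beyond the monotonicity of $g$, the sign of $\dot a$, and Lemma~\ref{b_inc}.
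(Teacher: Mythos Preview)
Your argument is correct and follows essentially the same route as the paper: both use Lemma~\ref{b_inc} to get $\dot b>0$, hence $b\ge 0$ and $a$ non-increasing, and then exploit the monotonicity of $a\mapsto \tfrac{1}{2}(\tfrac{1}{a^2}-\tfrac{1}{a})$ to force any touching time of the surface $S_1=0$ to be at least $t^*$. Your casewise treatment with the exit times $t_1,t_2$ is a slightly cleaner packaging than the paper's contradiction argument (which passes through the intermediate quantity $\tfrac{1}{a_0}e^{b_0 s}$ rather than using $a(t)\le a_0$ directly), but the content is the same. One small remark: your appeal to ``$a\le a_0$'' when verifying $b^2<1-a$ is technically justified only after the bootstrap closes, so it would be cleanest to fold that into the continuity argument you announce at the start rather than to assert it up front.
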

\begin{proof}
The inequality holds when $t=0$, because $a_0 < \frac{1}{2}$ and $B_0 =1$.
Also, we note that $t^*>0$ because $a_0 < 1/2$. In addition to this, the second equation in \eqref{3by3_exp} implies
$$a(t) = a_0 e^{-\int^t _0 b(\tau) \, d \tau}.$$

Since $a_0 \in (0, \frac{1}{2})$ and $b_0 \in [0, \frac{1}{2}]$, by Lemma \ref{b_inc}, $b(t)$ is strictly increasing and $a(t)$ is decreasing until $(a(t), b(t), B(t))$ touches either surfaces $S_1=0$ or $S_2 =0$. 

Suppose on the contrary that $s \in (0, t^*)$ is the earliest time where 
$$B(s) =\frac{1}{2} \bigg{(} \frac{1}{a^2 (s)} - \frac{1}{a(s)}  \bigg{)},$$
and $b(s) \leq \frac{1}{2}$. This implies
$$e^s = \frac{1}{2} \bigg{\{} \bigg{(} \frac{1}{a_0} e^{\int^s _0 b(\tau) \, d \tau} \bigg{)}^2   -\frac{1}{a_0} e^{\int^s _0 b(\tau) \, d \tau} \bigg{\}}.$$
Since $b(t)$ is strictly increasing, it holds
\begin{equation}\label{a0b0_ineq}
\frac{1}{a_0} e^{\int^s _0 b(\tau) \, d \tau} > \frac{1}{a_0} e^{b_0 s} \geq \frac{1}{a_0} >2.
\end{equation}
Thus,
$$e^s > \frac{1}{2} \bigg{\{} \bigg{(} \frac{1}{a_0} e^{b_0 s} \bigg{)}^2   -\frac{1}{a_0} e^{b_0 s} \bigg{\}}. $$
On the other hand, since $s < t^* =\log \big{[} \frac{1}{2} \big{(}  \frac{1}{a^2 _0} -\frac{1}{a_0}\big{)}  \big{]} $,
$$e^s< \frac{1}{2} \bigg{(}  \frac{1}{a^2 _0} -\frac{1}{a_0}\bigg{)} \leq  \frac{1}{2} \bigg{\{} \bigg{(} \frac{1}{a_0} e^{b_0 s} \bigg{)}^2   -\frac{1}{a_0} e^{b_0 s} \bigg{\}},$$
where the inequality hold by \eqref{a0b0_ineq}. This gives the contradiction and concludes the proof.
\end{proof}

\begin{figure}[ht]
\begin{center}
\includegraphics[width=140mm]{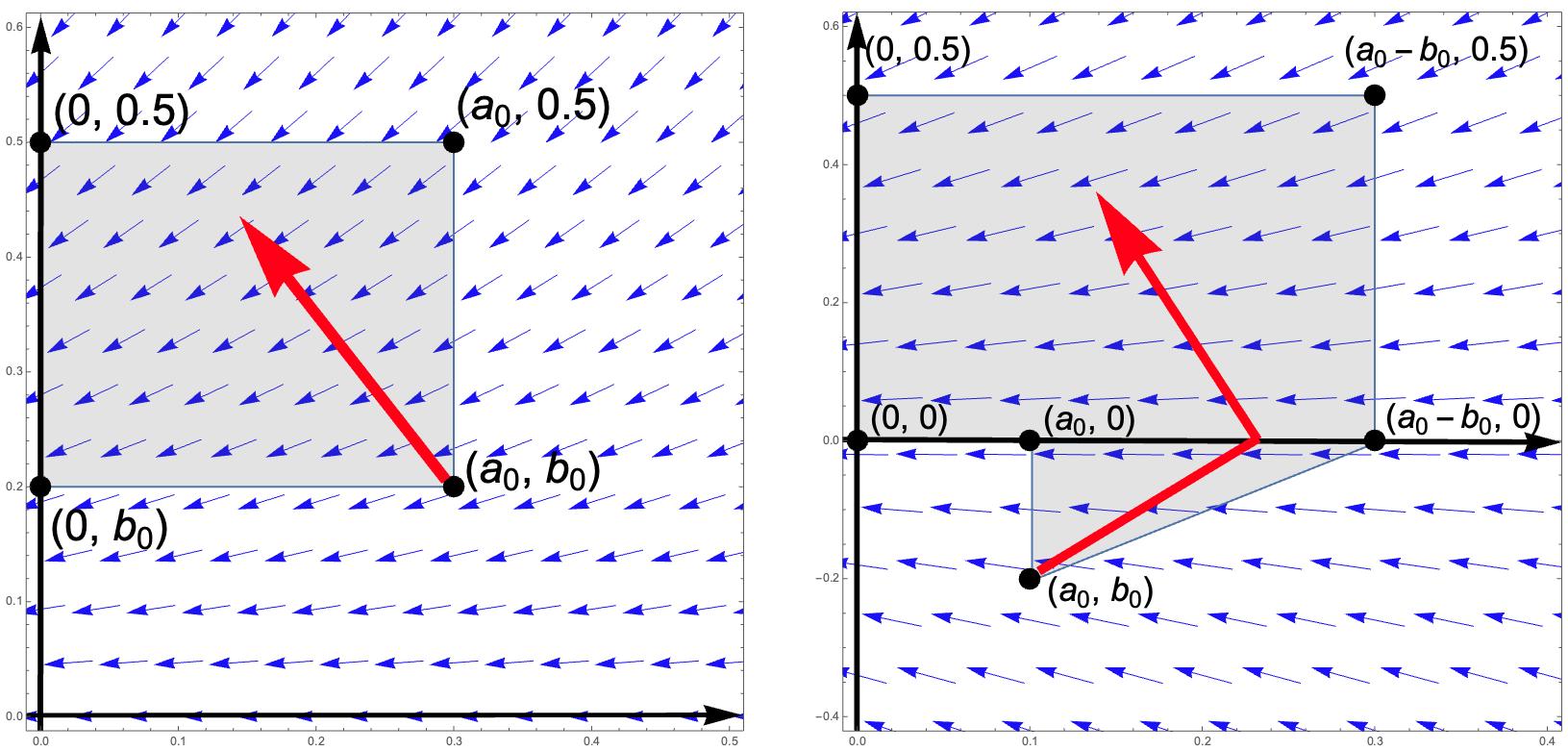}
\end{center}
\caption{Left: The rectangular region in Lemma \ref{b_dot_bound}, Right: The hexagon region in Lemmata \ref{t_starstar} and \ref{b_dot_bound_neg}, Both: Blue arrows denote $\nabla \big{(} -\frac{1}{2}b^2 -\frac{1}{2}a +\frac{1}{2} \big{)}$, and red arrows denote possible trajectories $(a(t), b(t))$ in both lemmata.}\label{gradient}
\end{figure}

\begin{lemma}\label{b_dot_bound}
Let $0 < a_0 < \frac{1}{2}$ and $0\leq b_0 \leq \frac{1}{2}$. Consider the solution $(a(t) , b(t), B(t))$ of \eqref{3by3_exp} with initial data $(a_0 , b_0 , B_0 =1)$. It holds
\begin{equation}\label{b_rate}
\dot{b}(t) \geq \frac{3}{8} - \frac{1}{2}a_0,
\end{equation}
as long as $b(t) \leq \frac{1}{2}$ and $B(t) < \frac{1}{2}\big{(} \frac{1}{a^2 (t)} - \frac{1}{a(t)}  \big{)}$.
\end{lemma}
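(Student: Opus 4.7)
The plan is a direct pointwise lower bound on $\dot b(t)$ obtained by combining the two standing hypotheses with a simple monotonicity observation on $a(t)$. Starting from the first equation of \eqref{3by3_exp} and plugging in the strict upper bound $B(t) < \frac{1}{2}\bigl(\frac{1}{a^2}-\frac{1}{a}\bigr)$, one gets
$$\dot b(t) = -\tfrac{1}{2}b^2 - Ba^2 - a + 1 > -\tfrac{1}{2}b^2 - \tfrac{a^2}{2}\Bigl(\tfrac{1}{a^2}-\tfrac{1}{a}\Bigr) - a + 1 = -\tfrac{1}{2}b^2 + \tfrac{1}{2} - \tfrac{a}{2}.$$
With this preliminary bound in hand, the lemma reduces to the two pointwise facts $b(t)^2 \leq 1/4$ and $a(t) \leq a_0$, which together yield $\dot b(t) > -\tfrac{1}{8} + \tfrac{1}{2} - \tfrac{a_0}{2} = \tfrac{3}{8} - \tfrac{a_0}{2}$, even with strict inequality.

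Both of the required facts will follow once I establish the single auxiliary bound $b(t)\geq 0$ throughout the interval where the hypotheses hold, and for this I plan to bootstrap from $t=0$. At the initial time, $a_0 < \tfrac{1}{2}$ and $b_0 \in [0,\tfrac{1}{2}]$ give $b_0^2 \leq \tfrac{1}{4} < 1-a_0$, so the $B$-hypothesis together with Lemma \ref{b_inc} yields $\dot b(0) > 0$. On the maximal subinterval on which $b(t)\geq 0$ and $a(t)\leq a_0$, the identity $\dot a = -ba \leq 0$ keeps $a$ non-increasing, preserving $a(t)\leq a_0$; and the granted bound $b(t)\leq 1/2$ combined with $b(t)\geq 0$ gives $b^2 \leq 1/4 < 1-a$, so Lemma \ref{b_inc} continues to apply and yields $\dot b(t) > 0$, preserving $b(t)\geq b_0 \geq 0$. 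A standard open/closed connectedness argument then shows this subinterval coincides with the full interval of interest.

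I do not anticipate a substantive obstacle. The only step that needs a modicum of care is the bootstrap just outlined: $a(t)\leq a_0$ is used to invoke Lemma \ref{b_inc}, which produces $b(t)\geq 0$, which in turn yields $a(t)\leq a_0$ via $\dot a = -ba$. This looks circular on first reading but is actually a standard forward propagation from the initial data, made rigorous by the continuity of $(a,b)$ and the fact that strict inequalities at $t=0$ persist on an open neighborhood. Once the bootstrap is handled, the remainder is the single displayed inequality above followed by elementary arithmetic.
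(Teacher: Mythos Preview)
Your proof is correct and follows essentially the same approach as the paper. Both arguments first derive $\dot b > -\tfrac{1}{2}b^2 + \tfrac{1}{2} - \tfrac{a}{2}$ from the $B$-hypothesis, then use the bootstrap $b(t)\geq 0$, $a(t)\leq a_0$ (the paper phrases this as ``$(a(t),b(t))$ stays in the rectangle with vertices $(a_0,b_0),(a_0,\tfrac12),(0,\tfrac12),(0,b_0)$'' and minimizes $-\tfrac12 b^2 - \tfrac12 a + \tfrac12$ over that rectangle, which is exactly your two pointwise bounds $b^2\leq \tfrac14$ and $a\leq a_0$); you are simply more explicit than the paper about why the bootstrap closes.
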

\begin{proof}
We reuse the inequality in \eqref{b_dot_ineq},
$$\dot{b} > -\frac{1}{2}b^2 -\frac{1}{2}a +\frac{1}{2}.$$
Note that $a(t)$ is decreasing and $b(t)$ is increasing until $(a(t), b(t), B(t))$ touches either $S_1 =0$ or $S_2 =0$ surfaces. Thus, $(a(t), b(t))$ is contained in the rectangular region with vertices $(a_0 ,b _0)$, $(a_0, \frac{1}{2})$, $(0, \frac{1}{2})$, and $(0,b_0)$, as shown in Figure \ref{gradient}.
Since  $\nabla \big{(} -\frac{1}{2}b^2 -\frac{1}{2}a +\frac{1}{2} \big{)} =\langle -\frac{1}{2}, -b \rangle$, on the rectangular region, the function $ -\frac{1}{2}b^2 -\frac{1}{2}a +\frac{1}{2}$ has its minimum at $(a_0 , \frac{1}{2})$. Thus, we obtain
$$\dot{b} \geq -\frac{1}{2}\big{(}\frac{1}{2}\big{)}^2 - \frac{1}{2} a_0 + \frac{1}{2}=\frac{3}{8} - \frac{1}{2}a_0.$$
\end{proof}

Finally, we are left with finding $(a_0 , b_0, B_0 =1)$ with $0 < a_0 < \frac{1}{2}$ and $0 \leq b_0 < \frac{1}{2}$ for which $(a(t), b(t), B(t))$ enters the invariant space $\Omega_0$ \emph{before} it hits $S_1 =0$ surface, i.e., \emph{before} $t^* =\log \big{[} \frac{1}{2} \big{(}  \frac{1}{a^2 _0} -\frac{1}{a_0}\big{)}  \big{]} $ in Lemma \ref{t_star}.

Integrating \eqref{b_rate} gives
$$b(t) \geq \bigg{(}  \frac{3}{8} -\frac{1}{2}a_0 \bigg{)}t + b_0.$$
This inequality leads to $b(t)=\frac{1}{2}$, no later than
$$s:= \frac{(\frac{1}{2} -b_0)}{(\frac{3}{8} -\frac{1}{2}a_0)},$$
unless $(a(t), b(t), B(t))$ touches $S_1 =0$ surface. Therefore, we require $s \leq t^*$, or
\begin{equation}\label{omega_m}
\frac{(\frac{1}{2} -b_0)}{(\frac{3}{8} -\frac{1}{2}a_0)} \leq \log \bigg{[} \frac{1}{2} \bigg{(}  \frac{1}{a^2 _0} -\frac{1}{a_0}\bigg{)}  \bigg{]}.
\end{equation}

\bigskip
\textbf{(ii)} $\mathbf{b_0< 0}$ \textbf{case:} Next, we find $a_0 \in (0, \frac{1}{2})$ and $b_0 <0$ such that the solution of \eqref{3by3_exp} exists for all time. The idea of proof is similar to that of $b\geq 0$ case.

\begin{lemma}\label{t_starstar}
Let $0 < a_0 < \frac{1}{2}$ and $a_0 -\frac{1}{2} < b_0 < 0$. Consider the solution $(a(t) , b(t), B(t))$ of \eqref{3by3_exp} with initial data $(a_0 , b_0 , B_0 =1)$. It holds that 
$$B(t) < \frac{1}{2} \bigg{(} \frac{1}{a^2 (t)} - \frac{1}{a(t)} \bigg{)},$$
for $t \in [0, t^{**})$, or $(a(t), b(t), B(t)) \in \Omega$, or both. Here
$$t^{**} := \log \bigg{[} \frac{1}{2} \bigg{(}  \frac{1}{(a_0 -b_0)^2} -\frac{1}{a_0 -b_0}\bigg{)}  \bigg{]}.$$
\end{lemma}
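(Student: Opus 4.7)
My plan is to follow the same contradiction scheme as in Lemma \ref{t_star}, but with the shifted parameter
\begin{equation*}
\tilde{a} := a_0 - b_0 \in (0, 1/2),
\end{equation*}
where $\tilde{a} < 1/2$ follows from the hypothesis $b_0 > a_0 - 1/2$. Specifically, I would assume there is an earliest $s \in (0, t^{**})$ at which $B(s) = \frac{1}{2}(1/a(s)^2 - 1/a(s))$ and the trajectory is not yet in $\Omega_0$, so $b(s) < 1/2$. For $t \in [0, s]$, the bound $B(t) \leq \frac{1}{2}(1/a(t)^2 - 1/a(t))$ together with $B(t) \geq 1$ forces $a(t) \leq 1/2$, and combined with $b(t) < 1/2$ this makes Lemma \ref{b_inc} applicable, giving $\dot{b}(t) > 0$ on $[0, s]$; in particular, $b$ is strictly increasing.

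The main obstacle, and the key new step compared to Lemma \ref{t_star}, is to prove $a(t) \leq \tilde{a}$ throughout $[0, s]$. Because $b_0 < 0$, $\dot{a}(0) = -b_0 a_0 > 0$, so $a$ initially grows past $a_0$, and the naive bound $a \leq a_0$ used in Lemma \ref{t_star} fails. My idea is to track $a - b$ instead: using $Ba^2 \leq \frac{1}{2}(1-a)$, a direct calculation yields
\begin{equation*}
\frac{d}{dt}(a-b) \leq -ba + \frac{1}{2}b^2 + \frac{a}{2} - \frac{1}{2}.
\end{equation*}
Substituting $a = b + \tilde{a}$ into the right-hand side gives a concave quadratic in $b$ whose maximum over $\mathbb{R}$ equals $\frac{1}{2}(\tilde{a}^2 - 3/4)$, which is strictly negative since $\tilde{a} < 1/2$. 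Thus on the line $a - b = \tilde{a}$ the trajectory satisfies $\dot{a} - \dot{b} < 0$. Since it begins on this line at $t = 0$, a standard invariance argument shows $a(t) - b(t) \leq \tilde{a}$ for all $t \in [0, s]$.

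From here the bound $a(t) \leq \tilde{a}$ follows by splitting on the sign of $b$: whenever $b(t) \leq 0$, one has $a(t) \leq b(t) + \tilde{a} \leq \tilde{a}$; whenever $b(t) > 0$, the equation $\dot{a} = -ba < 0$ makes $a$ decreasing past the crossing time $t_1$ defined by $b(t_1) = 0$, and $a(t_1) \leq \tilde{a}$ by the first case, so $a(t) \leq a(t_1) \leq \tilde{a}$. This also confines the trajectory to the hexagonal region depicted on the right of Figure \ref{gradient}, bounded by $a = 0$, $b = b_0$, $b = 1/2$, $a = \tilde{a}$ (for $b \geq 0$), and the slanted line $b = a - \tilde{a}$ (for $b \leq 0$). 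The contradiction then closes as in Lemma \ref{t_star}: since $x \mapsto 1/x^2 - 1/x$ is strictly decreasing on $(0, 2)$ and $a(s) \leq \tilde{a}$,
\begin{equation*}
e^s = B(s) = \frac{1}{2}\Big(\frac{1}{a(s)^2} - \frac{1}{a(s)}\Big) \geq \frac{1}{2}\Big(\frac{1}{\tilde{a}^2} - \frac{1}{\tilde{a}}\Big) = e^{t^{**}},
\end{equation*}
contradicting $s < t^{**}$.
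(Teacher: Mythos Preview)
Your proof is correct and follows essentially the same route as the paper. Both arguments hinge on the barrier line $a-b=\tilde a=a_0-b_0$: one shows $\dot b-\dot a>0$ there (using $Ba^2\le\tfrac12(1-a)$ on $[0,s]$), concludes $a(t)\le\tilde a$ via the sign of $b$, and then closes the contradiction through the monotonicity of $x\mapsto 1/x^2-1/x$. The only cosmetic difference is how the barrier inequality is verified: the paper factors $-\tfrac12 b^2+ab-\tfrac{a}{2}+\tfrac12$ as $-\tfrac12\big(b-a+\sqrt{1-a+a^2}\big)\big(b-a-\sqrt{1-a+a^2}\big)$ and checks positivity for $b\le 0$ in the triangle, whereas you substitute $a=b+\tilde a$ and maximize the resulting concave quadratic in $b$ to get $\tfrac12(\tilde a^2-\tfrac34)<0$.
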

\begin{proof}
The inequality holds when $t=0$, because $a_0 < \frac{1}{2}$ and $B_0 =1$. Further, $t^{**}>0$ because $a_0 - b_0 <\frac{1}{2}$. Since $0 < a_0 < \frac{1}{2}$ and $a_0 -\frac{1}{2} <b_0 < 0$, by Lemma \ref{b_inc}, $b(t)$ is strictly increasing until $(a(t), b(t), B(t))$ touches either surfaces $S_1 =0$ or $S_2 =0$. Also, $a(t)$ is strictly increasing if $b(t)<0$; and $a(t)$ is strictly decreasing if $b(t)>0$. 

In particular, when $b(t)<0$ and $B(t)<\frac{1}{2}\big{(}  \frac{1}{a^2 (t)} - \frac{1}{a(t)} \big{)}$, it holds
$$\dot{b} > \dot{a}$$
in the triangular region with vertices $(0,0)$, $(0, -\frac{1}{2})$ and $(\frac{1}{2}, 0)$. Indeed, consider
\begin{equation*}
\begin{split}
\dot{b} - \dot{a} &=-\frac{1}{2}b^2 - B a^2 -a +1 +ba\\
&> -\frac{1}{2}b^2 - \frac{a^2}{2}\bigg{(} \frac{1}{a^2} - \frac{1}{a} \bigg{)} -a + 1 +ba\\
&=-\frac{1}{2}b^2 -\frac{a}{2} + ba + \frac{1}{2}\\
&=-\frac{1}{2}\big{(} b -a + \sqrt{1-a+a^2} \big{)}\big{(} b -a - \sqrt{1-a+a^2} \big{)} >0,
\end{split}
\end{equation*}
when $a - \sqrt{1-a+a^2} < b \leq 0$. This implies that $(a(t), b(t))$ stays in the irregular hexagon region with vertices $(a_0 , b_0)$, $(a_0 - b_0 , 0)$, $(a_0 - b_0, \frac{1}{2})$, $(0, \frac{1}{2})$, $(0,0)$, and $(a_0, 0)$ until $(a(t), b(t), B(t))$ touches $S_1 =0$ surface, if any. See Figure \ref{gradient}. In particular, 
$$a(t) < a_0 - b_0.$$

Now, let's suppose that $s \in (0, t^{**})$ is the earliest time where $B(s) = \frac{1}{2} \big{(}  \frac{1}{a^2 (s)}  - \frac{1}{a(s)} \big{)}$. This means
$$e^s =  \frac{1}{2} \bigg{(}  \frac{1}{a^2 (s)}  - \frac{1}{a(s)} \bigg{)}.$$
In addition to this, since $\frac{1}{2} \big{(} \frac{1}{a^2} - \frac{1}{a} \big{)}$ is decreasing in $a$, $a(s) < a_0 - b_0$ implies
$$\frac{1}{2} \bigg{(}  \frac{1}{a^2 (s)}  - \frac{1}{a(s)} \bigg{)} > \frac{1}{2} \bigg{(}  \frac{1}{(a_0 - b_0)^2}  - \frac{1}{a_0 - b_0} \bigg{)}. $$
Thus, 
$$e^s > \frac{1}{2} \bigg{(}  \frac{1}{(a_0 - b_0)^2}  - \frac{1}{a_0 - b_0} \bigg{)} ,$$
or
$$s> \log \bigg{[} \frac{1}{2} \bigg{(}  \frac{1}{(a_0 -b_0)^2} -\frac{1}{a_0 -b_0}\bigg{)}  \bigg{]} = t^{**},$$
this contradicts to $s<t^{**}$ and concludes the proof.
\end{proof}

\begin{lemma}\label{b_dot_bound_neg}
Let $0 < a_0 < \frac{1}{2}$ and $a_0 -\frac{1}{2} <b_0 <0$. Consider the solution $(a(t) , b(t), B(t))$ of \eqref{3by3_exp} with initial data $(a_0 , b_0 , B_0 =1)$. It holds
\begin{equation}\label{b_rate_bneg}
\dot{b}(t) \geq \frac{3}{8} - \frac{1}{2}(a_0 - b_0),
\end{equation}
as long as $b(t) \leq \frac{1}{2}$ and $B(t) < \frac{1}{2}\big{(} \frac{1}{a^2 (t)} - \frac{1}{a(t)}  \big{)}$.
\end{lemma}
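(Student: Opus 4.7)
The plan is to mirror the structure of Lemma \ref{b_dot_bound}: first extract a pointwise differential inequality for $\dot{b}$ whose right-hand side depends only on $(a,b)$, and then minimize that right-hand side over the planar region that the preceding lemma has already shown to contain the trajectory.

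For the inequality, I would use $B(t) < \frac{1}{2}\bigl(\frac{1}{a^2(t)} - \frac{1}{a(t)}\bigr)$, i.e.\ $Ba^2 < \frac{1}{2}(1-a)$, in the $\dot{b}$ equation of \eqref{3by3_exp} to obtain
$$\dot{b}(t) > -\tfrac{1}{2}b^2 - \tfrac{1}{2}a + \tfrac{1}{2} \;=:\; g(a,b).$$
From the confinement argument inside the proof of Lemma \ref{t_starstar} (in particular the estimate $\dot{b} > \dot{a}$ on $a-\sqrt{1-a+a^2} < b \le 0$), the trajectory $(a(t),b(t))$ remains, while $b(t)\le 1/2$ and $B(t)<\frac{1}{2}(\frac{1}{a^2}-\frac{1}{a})$, inside the irregular hexagon $H$ with vertices $(a_0,b_0)$, $(a_0-b_0,0)$, $(a_0-b_0,\tfrac{1}{2})$, $(0,\tfrac{1}{2})$, $(0,0)$, $(a_0,0)$, which plays the role of the rectangle used in Lemma \ref{b_dot_bound}.

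It remains to bound $g$ from below on $H$. Since $g$ has Hessian $\mathrm{diag}(0,-1)$, it is concave, so on the convex polygon $H$ the minimum is attained at a vertex. Computing the six values,
$$g(a_0,b_0)=\tfrac{1}{2}-\tfrac{a_0}{2}-\tfrac{b_0^2}{2}, \quad g(a_0-b_0,0)=\tfrac{1}{2}-\tfrac{a_0-b_0}{2}, \quad g\bigl(a_0-b_0,\tfrac{1}{2}\bigr)=\tfrac{3}{8}-\tfrac{a_0-b_0}{2},$$
and $g(0,\tfrac{1}{2})=\tfrac{3}{8}$, $g(0,0)=\tfrac{1}{2}$, $g(a_0,0)=\tfrac{1}{2}-\tfrac{a_0}{2}$. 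Direct comparison (using $b_0<0$, so that $a_0-b_0>a_0$) gives the candidate minimizer $(a_0-b_0,\tfrac{1}{2})$ with value $\tfrac{3}{8}-\tfrac{1}{2}(a_0-b_0)$; combining with $\dot{b}>g(a,b)$ yields \eqref{b_rate_bneg}.

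The only step that requires care is the last vertex comparison, verifying that $(a_0,b_0)$ is not a smaller candidate. I would use the identity
$$g(a_0,b_0) - g\bigl(a_0-b_0,\tfrac{1}{2}\bigr) = \tfrac{1}{8} - \tfrac{1}{2}\,b_0(b_0+1),$$
and note that the hypothesis $a_0-\tfrac{1}{2}<b_0<0$ forces $-\tfrac{1}{2}<b_0<0$, so $b_0+1>\tfrac{1}{2}>0$ and hence $b_0(b_0+1)<0$; the right-hand side is therefore strictly positive, confirming that $(a_0-b_0,\tfrac{1}{2})$ is indeed the minimizer. This is the one place where the lower bound $b_0 > a_0 - \tfrac{1}{2}$ (rather than the weaker $b_0>-\tfrac{1}{2}$ alone) enters essentially through $a_0-b_0<\tfrac{1}{2}$, which also guarantees that the claimed bound $\tfrac{3}{8}-\tfrac{1}{2}(a_0-b_0)$ is strictly positive so that the subsequent integration in \eqref{omega_m}-type argument for negative $b_0$ makes sense.
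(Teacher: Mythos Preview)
Your proof follows the same route as the paper: reduce to $\dot b>g(a,b):=\tfrac12-\tfrac{a}{2}-\tfrac{b^2}{2}$ via the bound on $B$, confine $(a,b)$ to the hexagon $H$ from Lemma~\ref{t_starstar}, and minimize $g$ over $H$. The paper locates the minimizer by inspecting $\nabla g=\langle -\tfrac12,-b\rangle$; you instead use concavity of $g$ and check all six vertices. Both lead to the same value $\tfrac38-\tfrac12(a_0-b_0)$ at $(a_0-b_0,\tfrac12)$, and your explicit comparison with $(a_0,b_0)$ is correct.

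One small slip: $H$ is \emph{not} convex. Tracing the boundary $(0,0)\to(a_0,0)\to(a_0,b_0)$ you turn clockwise at $(a_0,0)$, so that vertex is reflex. This does not break your argument, since a concave function on any compact polygonal region still attains its minimum at a vertex (min cannot be interior, and on each boundary edge concavity forces it to an endpoint). Alternatively, enclose $H$ in the rectangle $[0,a_0-b_0]\times[b_0,\tfrac12]$, observe the minimum of $g$ over that convex rectangle is $\tfrac38-\tfrac12(a_0-b_0)$ at $(a_0-b_0,\tfrac12)$, and note this vertex lies in $H$. Either patch closes the gap.
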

\begin{proof}
We reuse the argument in the proof Lemma \ref{b_dot_bound}. From the inequality in \eqref{b_dot_ineq},
$$\dot{b} > -\frac{1}{2}b^2 -\frac{1}{2}a +\frac{1}{2}.$$
Note that $a(t)$ is increasing if $b(t)<0$, and decreasing if $b(t)>0$; also $b(t)$ is increasing until $(a(t), b(t), B(t))$ touches either $S_1 =0$ or $S_2 =0$ surfaces. In addition to this, $\dot{b} > \dot{a}$ when $b(t) <0$.
 Thus, as shown in Figure \ref{gradient}, $(a(t), b(t))$ is contained in the irregular hexagon region with vertices $(a_0 , b_0)$, $(a_0 - b_0 , 0)$, $(a_0 - b_0, \frac{1}{2})$, $(0, \frac{1}{2})$, $(0,0)$, and $(a_0, 0)$ until $(a(t), b(t), B(t))$ touches $S_1 =0$ surface, if any. Since  $\nabla \big{(} -\frac{1}{2}b^2 -\frac{1}{2}a +\frac{1}{2} \big{)} =\langle -\frac{1}{2}, -b \rangle$, on the hexagon region, the function $ -\frac{1}{2}b^2 -\frac{1}{2}a +\frac{1}{2}$ has its minimum at $(a_0 -b_0 , \frac{1}{2})$. Thus, we obtain
$$\dot{b} \geq -\frac{1}{2}\big{(}\frac{1}{2}\big{)}^2 - \frac{1}{2}(a_0 -b_0 )+ \frac{1}{2}=\frac{3}{8} - \frac{1}{2}(a_0 -b_0).$$
\end{proof}

Finally, we are left with finding $(a_0 , b_0, B_0 =1)$ with $0 < a_0 < \frac{1}{2}$ and $a_0 -\frac{1}{2} \leq b_0 < 0$ for which $(a(t), b(t), B(t))$ enters the invariant space $\Omega_0$ \emph{before} it hits $S_1 =0$ surface, i.e., \emph{before} $t^{**} =\log \big{[} \frac{1}{2} \big{(}  \frac{1}{(a_0 - b_0)^2} -\frac{1}{a_0 - b_0}\big{)}  \big{]} $ in Lemma \ref{t_starstar}.

Integrating \eqref{b_rate_bneg} gives
$$b(t) \geq \bigg{\{}  \frac{3}{8} -\frac{1}{2}(a_0 - b_0) \bigg{\}}t + b_0.$$
This inequality leads to $b(t)=\frac{1}{2}$, no later than
$$s:= \frac{(\frac{1}{2} -b_0)}{\{ \frac{3}{8} -\frac{1}{2}(a_0 - b_0)  \}},$$
unless $(a(t), b(t), B(t))$ touches $S_1 =0$ surface. Therefore, we require $s \leq t^{**}$, or
\begin{equation}\label{omega_b}
\frac{(\frac{1}{2} -b_0)}{\{ \frac{3}{8} -\frac{1}{2}(a_0 - b_0)  \}}\leq \log \bigg{[} \frac{1}{2} \bigg{(}  \frac{1}{(a _0 - b_0)^2} -\frac{1}{a_0- b_0}\bigg{)}  \bigg{]}.
\end{equation}

\bigskip 
\textbf{(iii)} \textbf{Union of} $\mathbf{b_0\geq 0}$ \textbf{and} $\mathbf{b_0< 0}$ \textbf{cases:}
In order to find a set of sub-critical data for \eqref{ode_exp}, we collect the sets described in \eqref{omega_0} and \eqref{omega_m}, as well as \eqref{omega_b}.
The three condition mentioned above lead to
$$\Omega^T : = \Omega_0 \bigg{|}_{B=1} := \{(a,b) \ | \ 0 <a \leq 1/2, \ b \geq 1/2 \},$$
$$\Omega^{M} : = \bigg{\{}(a,b) \ \bigg{|} \ 0 <a < 1/2, \ \max \bigg{\{} 0, \frac{1}{2} - \bigg{(}\frac{3}{8}  - \frac{a}{2}\bigg{)}\log\bigg{[} \frac{1}{2} \big{(} \frac{1}{a^2}  - \frac{1}{a} \big{)} \bigg{]} \bigg{\}} \leq b \leq \frac{1}{2} \bigg{\}},$$
and
$$\Omega^{B} := \bigg{\{ } (a,b) \ \bigg{|} \ 0 <a <1/2, \ a -\frac{1}{2} < b <0,   \ \frac{(\frac{1}{2} -b)}{\{ \frac{3}{8} -\frac{1}{2}(a - b)  \}}\leq \log \bigg{[} \frac{1}{2} \bigg{(}  \frac{1}{(a  - b)^2} -\frac{1}{a- b}\bigg{)}  \bigg{]} \bigg{\}},$$
respectively, as shown in Figure \ref{ot_om_ob}. Note that $\Omega^T$, $\Omega^{M}$ and $\Omega^{B}$ described above are slightly different (up to boundaries) from the ones in Theorem \ref{thm_main}. This is because the open set $\Omega^{TMB} \backslash \partial \Omega^{TMB}$ is needed later when we perform comparisons between \eqref{ode_exp} and the original system \eqref{ode2_intro}.

We claim that $(a_0, b_0)\in \Omega^{TMB}:= \Omega^T \cup \Omega^M \cup \Omega^B$ admits a global solution to \eqref{ode_exp}.


\begin{lemma}\label{lemma_invariant}
$(a_0, b_0) \in \Omega^{TMB}$ leads to
$$0 < a(t) \leq 1/2 , \ and \ -\frac{1}{2}  \leq b(t) \leq\max \{ |b_0| , \sqrt{2} \} , \ \ \forall t \in [0, \infty).$$ 
\end{lemma}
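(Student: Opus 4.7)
The plan is to show that every trajectory starting from $\Omega^{TMB}$ enters the invariant region $\Omega_0$ in finite time and remains there forever, and then to extract the stated bounds from $\Omega_0$'s defining constraints together with a short scalar comparison for the upper bound on $b$.

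First, I would split the argument along the three pieces $\Omega^T$, $\Omega^M$, $\Omega^B$. For $(a_0,b_0)\in\Omega^T$, the initial point $(a_0,b_0,1)$ already lies in $\Omega_0$ because $a_0\le 1/2$ forces $B_0=1\le\tfrac12(1/a_0^2-1/a_0)$, and the invariance computations \eqref{s1_grad}--\eqref{s2_grad} pin it there for all time. For $(a_0,b_0)\in\Omega^M$, Lemma \ref{t_star} keeps the trajectory strictly below $S_1=0$ on $[0,t^*)$, while Lemma \ref{b_dot_bound} gives the uniform lower bound $\dot b(t)\ge 3/8-a_0/2>0$ throughout the transient; hence $b$ reaches $S_2=0$ no later than $s=(1/2-b_0)/(3/8-a_0/2)$, and the defining inequality \eqref{omega_m} is precisely $s\le t^*$, so the trajectory enters $\Omega_0$ before it can touch $S_1=0$. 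The $\Omega^B$ case is identical in spirit, using Lemmas \ref{t_starstar} and \ref{b_dot_bound_neg} together with \eqref{omega_b}.

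Once inside $\Omega_0$, the quantitative bounds drop out quickly. Since $b\ge 1/2>0$, the identity $\dot a=-ba$ forces $a$ to decrease, and the formula $a(t)=a_0\exp(-\int_0^t b\,d\tau)$ keeps $a$ strictly positive, yielding $0<a(t)\le 1/2$. For the upper bound on $b$, I would drop the nonpositive terms $-Ba^2-a$ from the first equation of \eqref{3by3_exp} to obtain the scalar comparison $\dot b\le -\tfrac12 b^2+1$, so $b(t)>\sqrt 2$ immediately forces $\dot b(t)<0$; thus $b$ cannot exceed $\max\{b_0,\sqrt 2\}$ on the portion of the trajectory lying in $\Omega_0$. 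During the transient phase for $\Omega^M$ and $\Omega^B$, the value of $b$ is monotone increasing and bounded above by $1/2\le\sqrt 2$ (via Lemma \ref{b_inc}, whose hypothesis $b^2<1-a$ is automatic since $b\le 1/2$ and $a\le 1/2$). Combining these two regimes yields $b(t)\le\max\{|b_0|,\sqrt 2\}$.

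The lower bound $b(t)\ge -1/2$ is essentially free: on $\Omega^T\cup\Omega^M$ one has $b_0\ge 0$ and $b$ nondecreasing throughout, so $b(t)\ge 0$; on $\Omega^B$ the hypothesis $b_0>a_0-1/2>-1/2$ combines with the monotonicity in Lemma \ref{b_inc} (valid on the hexagonal region of Figure \ref{gradient}) to give $b(t)\ge b_0>-1/2$. The main obstacle I expect is the bookkeeping needed to stitch the transient and invariant phases together, in particular verifying that the hypotheses of Lemmas \ref{t_star}--\ref{b_dot_bound_neg} remain in force until $b$ actually crosses $1/2$, and that the defining inequalities \eqref{omega_m}, \eqref{omega_b} are sharp enough to rule out the trajectory touching $S_1=0$ first. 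Once that timing is pinned down, the two phases glue cleanly and invariance of $\Omega_0$ finishes the proof.
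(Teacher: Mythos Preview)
Your proposal is correct and follows essentially the same route as the paper: use the preceding lemmata to drive the trajectory into the invariant region $\Omega_0$, then read off the $a$-bounds from $\Omega_0$ and obtain the upper bound on $b$ from the scalar comparison $\dot b\le -\tfrac12 b^2+1$. The paper's own proof is terser—it simply invokes ``we showed that $(a(t),b(t))\in\Omega^T$ in finite time'' and then derives $\dot b\le -\tfrac12 b^2+1$ via $-e^ta^2\le -a^2$ and $\max_{0\le a\le 1/2}\{-a^2-a+1\}=1$—whereas you spell out the transient-phase bounds and the lower bound on $b$ more explicitly, but the content is the same.
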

\begin{proof}
Since we showed that $(a(t), b(t)) \in \Omega^T$ in finite time, it suffices to show that $b(t) \leq \max \{ |b_0 |, \sqrt{2} \},  \forall t \in [0, \infty)$. Consider
$$\dot{b} = -\frac{1}{2} b^2 -e^t a^2 -a +1 \leq -\frac{1}{2}b^2 -a^2 -a +1 \leq -\frac{1}{2}b^2 + \max_{0 \leq a \leq 1/2} \big{\{} -a^2 -a +1 \big{\}}.$$
Thus, $\dot{b} \leq -\frac{1}{2}b^2 + 1$ gives the desired result.
\end{proof}


Now, the last step of the proof is to compare
\begin{equation}\label{comp_epsystem}
\left\{
  \begin{array}{ll}
    \dot{d} = -\frac{1}{2}d^2 +A(t) \rho^2 -\rho +1, \\
    \dot{\rho} = -d\rho,\\
  \end{array}
\right.
\end{equation}
with
\begin{equation}\label{comp_auxsystem}
\left\{
  \begin{array}{ll}
    \dot{b} = -\frac{1}{2}b^2  -e^t a^2 -a +1, \\
    \dot{a} = -ba.\\
  \end{array}
\right.
\end{equation}
We recall that
$$-e^t \leq A(t) \leq  \frac{1}{2}\bigg{(} \frac{\omega_0}{\rho_0} \bigg{)}^2, \ \ t\geq 0.$$
We show the monotonicity relation between two ODE systems:
\begin{lemma}\label{lemma_comp}
\begin{equation*}
\left\{
  \begin{array}{ll}
	b(0) < d(0), \\
    0<\rho(0)<a(0)\\
  \end{array}
\right.
implies
\ 
\left\{
  \begin{array}{ll}
	b(t) < d(t), \\
    0<\rho(t)<a(t)\\
  \end{array}
\right.
\ for \ all \ t>0.
\end{equation*}
\end{lemma}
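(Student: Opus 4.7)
The plan is to argue by continuity and contradiction on the signs of the defect quantities $u(t) := d(t)-b(t)$ and $v(t) := a(t)-\rho(t)$, which are strictly positive at $t=0$ by hypothesis. Set
$$T^* := \sup\{\,T>0 : u(t) > 0 \text{ and } v(t) > 0 \text{ on } [0,T]\,\},$$
so that $T^*>0$ by continuity. We want to rule out $T^* < \infty$.

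First I would derive closed evolution equations for $(u,v)$ by subtracting \eqref{comp_auxsystem} from \eqref{comp_epsystem}:
\begin{align*}
\dot u &= -\tfrac{1}{2}(d+b)\,u + A(t)\rho^2 + e^{t}a^2 + v,\\
\dot v &= -ba + d\rho \;=\; \rho\,u - b\,v.
\end{align*}
The crucial step exploits the lower bound $A(t)\geq -e^{t}$ together with the fact that $v > 0$ on $[0,T^*)$:
$$A(t)\rho^2 + e^{t}a^2 \;\geq\; -e^{t}\rho^2 + e^{t}a^2 \;=\; e^{t}(a+\rho)(a-\rho) \;=\; e^{t}(a+\rho)\,v,$$
which turns the $u$-equation into the one-sided cooperative inequality
$$\dot u \;\geq\; -\tfrac{1}{2}(d+b)\,u + \bigl[\,1+e^{t}(a+\rho)\,\bigr]\,v, \qquad \dot v \;=\; \rho\,u - b\,v.$$
Here $\rho(t)>0$ is automatic from $\dot\rho=-d\rho$ and $\rho(0)>0$.

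At $t=T^*$, continuity forces at least one of $u(T^*),v(T^*)$ to vanish while both remain $\geq 0$. If $v(T^*)=0$ and $u(T^*)>0$, then $\dot v(T^*)=\rho(T^*)u(T^*)>0$, contradicting the fact that $v$ reaches $0$ from above; symmetrically, if $u(T^*)=0$ and $v(T^*)>0$, the cooperative inequality gives $\dot u(T^*) \geq [1+e^{T^*}(a+\rho)]v(T^*) > 0$, the same contradiction. The genuinely subtle case is $u(T^*)=v(T^*)=0$, where these first-derivative arguments collapse (the right-hand sides vanish identically at $T^*$). To dispose of it I would compare $(u,v)$ against the solution $(p,q)$ of the \emph{linear} auxiliary system
\begin{align*}
\dot p &= -\tfrac{1}{2}(d+b)\,p + [1+e^{t}(a+\rho)]\,q, \qquad p(0)=u(0)>0,\\
\dot q &= \rho\,p - b\,q, \qquad\qquad\qquad\qquad\qquad\quad\; q(0)=v(0)>0.
\end{align*}
Its time-dependent coefficient matrix has strictly positive off-diagonal entries, hence is Metzler and irreducible; the positive orthant is therefore invariant, and in fact $p(t),q(t)>0$ strictly for all finite $t$. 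The Kamke/Müller comparison theorem applied to the cooperative inequality above then gives $u(t)\geq p(t)>0$ and $v(t)\geq q(t)>0$ on $[0,T^*]$, contradicting $u(T^*)=v(T^*)=0$.

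The main obstacle I expect is exactly this coincident-zero case $(u,v)(T^*)=(0,0)$; everything outside of it is a direct one-line sign check. As a consistency check on the approach, note that in the repulsive case $k=+1$ the sign of the $(a-\rho)$ contribution in $\dot u$ flips to $-v$, destroying the nonnegativity of the off-diagonal coefficient in the linearized comparison matrix, i.e.\ the Metzler/cooperative property on which the whole argument depends — this matches the paper's Remark 5 that the comparison method does not extend to $k=+1$.
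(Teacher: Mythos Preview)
Your proof is correct, but the paper's own argument is shorter because it sidesteps the simultaneous-zero case entirely rather than invoking Kamke/M\"uller machinery. The key observation the paper uses is the explicit integral representation coming from $\dot a=-ba$ and $\dot\rho=-d\rho$: at the first violation time $t_1$ one has
\[
a(t_1)=a(0)\,e^{-\int_0^{t_1} b}\;>\;\rho(0)\,e^{-\int_0^{t_1} d}=\rho(t_1),
\]
the strict inequality holding because $a(0)>\rho(0)$ and $b\le d$ on $[0,t_1]$. Thus $v(t_1)>0$ is automatic, your Cases~1 and~3 never occur, and the whole lemma reduces to your Case~2, which is a single derivative check (the paper computes $\dot b-\dot d$ at $t_1$ and shows it is strictly negative using $A(t_1)\ge -e^{t_1}$ and $a(t_1)>\rho(t_1)$).

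What your route buys: the cooperative-system formulation makes the structure transparent. Writing $\dot u\ge M_{11}u+M_{12}v$, $\dot v=M_{21}u+M_{22}v$ with $M_{12}=1+e^t(a+\rho)>0$ and $M_{21}=\rho>0$ isolates exactly where the hypothesis $A(t)\ge -e^t$ enters (it is precisely what makes the forcing $(A(t)+e^t)\rho^2$ in the variation-of-constants formula nonnegative), and it makes the failure for $k=+1$ visible as a loss of the Metzler property --- the paper records that failure only as an unexplained remark. The cost is that you import a comparison theorem to handle a degeneracy that the integral trick shows cannot happen.
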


\begin{proof}
Suppose $t_1$ is the earliest time when the above assertion is violated. Consider
$$a(t_1) = a(0) e^{-\int^{t_1} _ 0 b(\tau) \, d \tau} > \rho(0) e^{-\int^{t_1} _0 d(\tau) \, d \tau} = \rho (t_1).$$
Therefore, it is left with only one possibility that $d(t_1) = b(t_1).$ Consider
\begin{equation}\label{diff_systems}
\dot{b}-\dot{d} =-\frac{1}{2} (b^2 -d^2) - e^t a^2 - A(t) \rho^2 - a + \rho.
\end{equation}
Since $b(t)-d(t) <0$ for $t<t_1$ and $b(t_1) - d(t_1)=0$, hence at $t=t_1$, we have
$$\dot{b}(t_1)-\dot{d} (t_1) \geq 0.$$
But the right hand side of \eqref{diff_systems},  when it is evaluated at $t=t_1$, is negative. Indeed
\begin{equation*}
\begin{split}
&-\frac{1}{2} (b^2 (t_1) -d^2 (t_1)) - e^{t_1} a^2 (t_1) - A(t_1) \rho^2 (t_1) - a(t_1) + \rho(t_1)\\
&= - e^{t_1} a^2 (t_1) - A(t_1) \rho^2 (t_1) - a(t_1) + \rho(t_1)\\
&=e^{t_1} \big{(}  -a^2 (t_1) + \rho^2 (t_1) \big{)} +\rho^2 (t_1) \big{(} -e^{t_1} -A(t_1)  \big{)} - a(t_1) + \rho(t_1).
\end{split}
\end{equation*}
Thus $a(t_1) > \rho(t_1)$ and $-e^{t_1} \leq A(t_1)$ give the desired result. This leads to the contradiction.
\end{proof}

\begin{lemma}\label{lemma_dbound}
 Consider \eqref{comp_epsystem}. If there exists $\rho_M  >0$ such that $\rho(t) \leq \rho_M$, $\forall t \geq 0$, then $d(t)$ is bounded from above for all $d_0 \in \mathbb{R}$.
\end{lemma}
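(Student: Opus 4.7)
The plan is to reduce the $d$ equation in \eqref{comp_epsystem} to a scalar Riccati differential inequality whose right-hand side is bounded above by a negative definite quadratic plus a constant, and then invoke a standard comparison/invariance argument on the real line.

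First I would pin down a uniform upper bound for the non-Riccati terms of $\dot d$. Recall from \eqref{A_upper} that $A(t)\le\frac12(\omega_0/\rho_0)^2$ for all $t\ge 0$, regardless of whether the system is in Case I or Case II. Combining this with the standing assumption $\rho(t)\le\rho_M$, and treating the sign of $A(t)$ case by case (if $A(t)\ge 0$ use $\rho^2\le\rho_M^2$; if $A(t)<0$ use $A(t)\rho^2\le 0$), one gets
\begin{equation*}
A(t)\,\rho(t)^2 \;\le\; \tfrac12\bigl(\omega_0/\rho_0\bigr)^{2}\rho_M^{2}\qquad\text{for all } t\ge 0.
\end{equation*}
Since $\rho(t)>0$ (from $\rho'=-\rho d$ and $\rho_0>0$), we also have $-\rho(t)+1\le 1$. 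Setting
\begin{equation*}
K \;:=\; \tfrac12\bigl(\omega_0/\rho_0\bigr)^{2}\rho_M^{2}+1,
\end{equation*}
the first equation in \eqref{comp_epsystem} yields the scalar differential inequality
\begin{equation*}
\dot d(t) \;\le\; -\tfrac12\,d(t)^{2}+K \qquad\text{for all } t\ge 0.
\end{equation*}

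Next I would conclude by an elementary invariance argument. On the set $\{d>\sqrt{2K}\}$ the right-hand side above is strictly negative, so $d$ is strictly decreasing there; in particular, once $d(t)$ reaches $\sqrt{2K}$ from above it cannot exceed that value again. Hence, for every $t\ge 0$,
\begin{equation*}
d(t)\;\le\;\max\bigl\{\,d_{0},\,\sqrt{2K}\,\bigr\},
\end{equation*}
which establishes the required upper bound, valid for all $d_0\in\mathbb R$.

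There is no substantial obstacle in this lemma; the only care needed is to use the \emph{one-sided} uniform bound on $A(t)$ from \eqref{A_upper} (a lower bound on $A(t)$ is not available without the exponential hypothesis \eqref{exp_condi_2}, but it is not required here since $A(t)\rho^{2}$ already enters $\dot d$ as an upper-bounded term). The lemma is essentially a statement that the dissipative $-\tfrac12 d^{2}$ tames any bounded source term, and, crucially, it does \emph{not} rule out $d(t)\to-\infty$; this one-sided nature matches how the lemma will be used in the sequel, namely to preclude the blow-up mode $d\to+\infty$ once $\rho$ is known to stay bounded.
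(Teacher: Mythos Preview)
Your proof is correct and follows essentially the same route as the paper: bound $A(t)\rho^2-\rho+1$ uniformly using $A(t)\le\tfrac12(\omega_0/\rho_0)^2$ and $\rho\le\rho_M$, reduce to $\dot d\le -\tfrac12 d^2+\text{const}$, and invoke the obvious invariance of $\{d\le\sqrt{2\cdot\text{const}}\}$. The paper keeps the $-\rho$ term and arrives at the slightly sharper constant $\max\{1,\gamma\rho_M^2-\rho_M+1\}$ with $\gamma=\tfrac12(\omega_0/\rho_0)^2$, while you discard $-\rho$ and get $K=\gamma\rho_M^2+1$; your sign case split on $A(t)$ is also harmless but unnecessary since $\rho^2\ge0$ already gives $A(t)\rho^2\le\gamma\rho^2\le\gamma\rho_M^2$.
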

\begin{proof}
Since $A(t)\leq   \frac{1}{2}( \frac{\omega_0}{\rho_0} )^2 =:\gamma$, we have
\begin{equation*}
\begin{split}
\dot{d}&= -\frac{1}{2}d^2 + A(t)\rho^2 - \rho +1\\
&\leq  -\frac{1}{2}d^2 +    \gamma \rho^2 - \rho +1\\
&\leq  -\frac{1}{2}d^2  + \max\{ 1, \gamma\rho^2 _M - \rho_M +1 \}.
\end{split}
\end{equation*}
Thus,
$$d(t) \leq \max \big{\{} d_0, \sqrt{2\max\{ 1, \gamma \rho^2 _M - \rho_M +1 \} }  \big{\}}.$$
\end{proof}

We finally combine the comparison principle in Lemma \ref{lemma_comp} with Lemma \ref{lemma_invariant}. Let
$$\Omega:= \Omega^{TMB} \backslash \partial \Omega^{TMB}.$$
Note that $\Omega$ is  an open set and given any initial data $(\rho_0 , d_0) \in \Omega$ for system \ref{comp_epsystem}, we can find $\epsilon>0$ and initial data $(a_0 , b_0):=(\rho_0 +\epsilon , d_0 -\epsilon) \in \Omega$ for system \ref{comp_auxsystem}. Therefore, by lemmata  \ref{lemma_comp} and \ref{lemma_invariant},
$$0<\rho(t) < \frac{1}{2} \ \ and \ \ - \frac{1}{2}< d(t), \ \ \forall t\geq 0.$$
In addition to this, by Lemma \ref{lemma_dbound}, $\rho(t)< \frac{1}{2}$ implies that $d(t)$ is bounded from above for all $t\geq 0$. This completes the proof of the theorem.


\bigskip
\section{Numerical Examples}\label{section 5}
In this section, we present examples and numerical simulations to illustrate that there are some initial data that lead to the global smooth solutions of two-dimensional Euler-Poisson system in \eqref{101}. We also observe the time evolution of $\Delta^{-1} (\rho -c)$ (and its derivatives) which is closely related to the decay condition  $A(t) \geq -e^{t}$ in  \eqref{exp_condi_2}.


\begin{figure}[ht!]\begin{center}
\includegraphics[width=130mm]{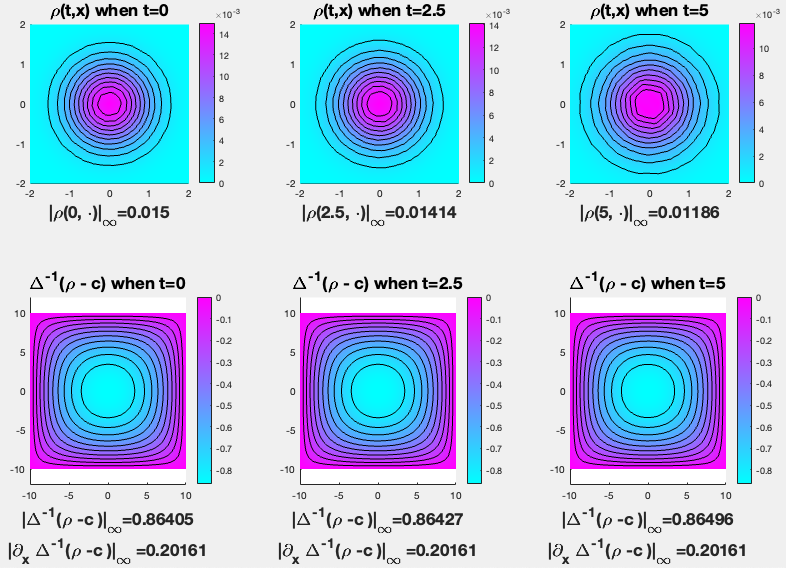}
\caption{Example \ref{eg5.1}: Time evolutions (left to right) of $\rho(t,x)$ and $\Delta^{-1}(\rho -c_b)$.}\label{eg1}
\end{center}
\end{figure}

\begin{example}\label{eg5.1}
(attractive, non-zero background) We consider \eqref{101} with $k=-1$, $c_b=0.03$ subject to the following initial data: $\mathbf{u}(0,\mathbf{x})=\mathbf{0}$ and
$$\rho(0, \mathbf{x}) = \frac{0.03}{2}\exp(-|\mathbf{x}|^2).$$
It is observed that $|\rho(t, \cdot)|_{\infty}$, $|\Delta^{-1}(\rho -c)|_{\infty}$ are strictly decreasing in time. It is interesting to notice that $|\frac{\partial}{\partial x} \Delta^{-1}(\rho -c)   |_{\infty}$ stays the same in time. From the graphs in Figure \ref{eg1}, we anticipate that the time growth of $|R_{ij}(\rho -c)|_{\infty}$ and $|A(t)|$ should be mild. 

We should point out that the initial vorticity $\omega =\nabla \times \mathbf{u}(0,\cdot)$ vanishes in this example, which in turn $\omega(t) \equiv 0$ for all time due to \eqref{Veqn}.  Thus the divergence equation in \eqref{Deqn} is reduced to
$$d'=-\frac{1}{2}d^2 -\frac{1}{2}\eta^2  -\frac{1}{2}\xi^2 + k(\rho-c_b).$$
The simulation results in Examples \ref{eg5.1} and \ref{eg5.2} are interesting: In addition to $-\frac{1}{2}d^2$, \emph{all terms except} $-kc_b$ intensify the blow-up behavior of $d$. Thus, with vanishing initial vorticity condition, it is natural to anticipate blow-up for a large class of initial data, e.g. \cite{CT08}. However, the numerical simulations in two examples demonstrate global existences.

\end{example}

\begin{figure}[ht!]\begin{center}
\includegraphics[width=130mm]{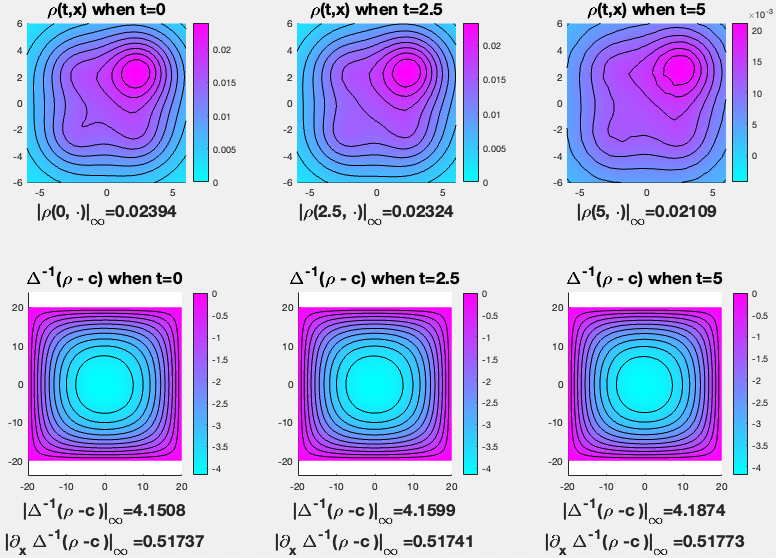}
\caption{Example \ref{eg5.2}: Time evolutions (left to right) of $\rho(t,x)$ and $\Delta^{-1}(\rho -c_b)$.}\label{eg2}
\end{center}
\end{figure}

\begin{example}\label{eg5.2}
(attractive, non-zero background) We consider \eqref{101} with $k=-1$, $c_b=0.04$ subject to the following initial data: $\mathbf{u}(0,\mathbf{x})=\mathbf{0}$ and non symmetric density
\begin{equation*}
\begin{split}
\rho(0,\mathbf{x}) = 0.01\sech &\big{(}0.5 |\mathbf{x} + \langle 2.5, 2.5 \rangle| \big{)} + 0.02\sech \big{(}0.5 |\mathbf{x} - \langle 2.5, 2.5 \rangle|\big{)}\\
&+ 0.01\sech \big{(}0.5 |\mathbf{x} - \langle -2.5, 2.5 \rangle| \big{)}+0.01\sech \big{(}0.5 |\mathbf{x} - \langle 2.5, -2.5 \rangle| \big{)}.
\end{split}
\end{equation*}
It is observed that $|\rho(t, \cdot)|_{\infty}$ is strictly decreasing in time. $|\Delta^{-1}(\rho -c)|_{\infty}$ and $|\frac{\partial}{\partial x} \Delta^{-1}(\rho -c)   |_{\infty}$ are increasing in time, but their growth rate is very mild. See Figure \ref{eg2}.
\end{example}

\begin{figure}[ht!]\begin{center}
\includegraphics[width=130mm]{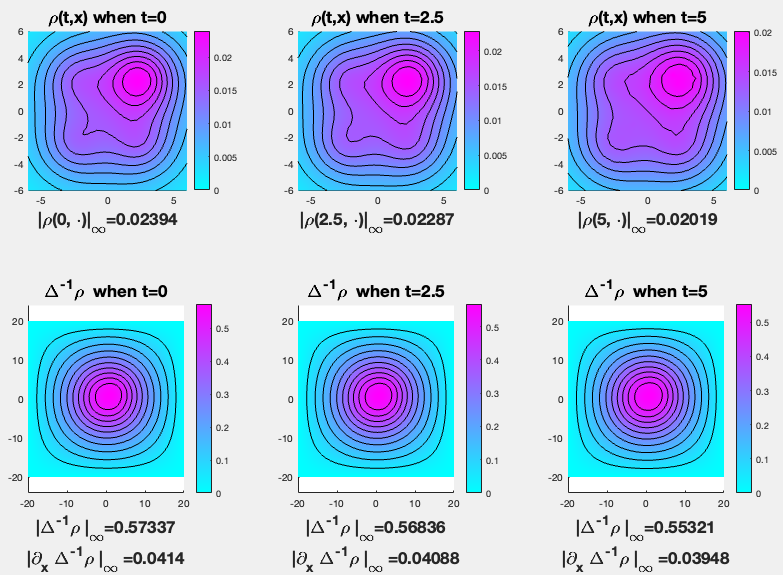}
\caption{Example \ref{eg5.3}: Time evolutions (left to right) of $\rho(t,x)$ and $\Delta^{-1}\rho $.}\label{eg3}
\end{center}
\end{figure}

\begin{example}\label{eg5.3}
(repulsive, zero background) We consider \eqref{101} with $k=1$, $c_b=0$ subject to the initial data in Example \ref{eg5.2}.
It is observed that $|\rho(t, \cdot)|_{\infty}$, $|\Delta^{-1}\rho |_{\infty}$ and $|\frac{\partial}{\partial x} \Delta^{-1}\rho    |_{\infty}$ are strictly decreasing in time. See Figure \ref{eg3}.
\end{example}


\clearpage
\bibliographystyle{abbrv}

\end{document}